\newtheorem{thm}{Theorem}
\newtheorem{lem}[thm]{Lemma}
\newtheorem{coro}[thm]{Corollary}
\newtheorem{conj}{Conjecture}
\theoremstyle{definition}
\newcommand{\wt}{\mathcal{W}_T}
\newcommand{\wrt}{\mathcal{W}_R}
\begin{document}

\title{Bipyramid decompositions of multicrossing link complements}

\begin{abstract}
Generalizing previous constructions, we present a dual pair of decompositions of the complement of a link $L$ into bipyramids, given any multicrossing projection of $L$. When $L$ is hyperbolic, this gives new upper bounds on the volume of $L$ given its multicrossing projection. These bounds are realized by three closely related infinite tiling weaves.
\end{abstract}

\author[Colin Adams]{Colin Adams}
\address{Department of Mathematics and Statistics, Williams College, Williamstown, MA 01267}
\email{Colin.C.Adams@williams.edu}

\author{Gregory Kehne}
\address{Department of Mathematical Sciences, Carnegie Mellon University, Pittsburgh, PA 15213}
\email{gkehne@andrew.cmu.edu}
\date{\today}

\maketitle

\section{Introduction} The standard planar projections of links embedded in $S^3$ are \textit{2-crossing projections}, which means that strands only cross one another two at a time in the projection---or equivalently, that the vertices of the projection graph $G$ all have degree $4$. Recently, 2-crossing projections have been generalized to \textit{$n$-crossing projections}, which are projections of a link in which all strands cross one another $n$ at a time; equivalently, $G$ is $2n$-regular.

Every link has a projection consisting solely of $n$-crossings for every $n \ge 2$, and many of the ideas that apply to 2-crossing projections can be generalized to $n$-crossing projections. See for instance \cite{Adams2012}, \cite{SMALL2014}, and  \cite{Adams2013}. More generally, \textit{multicrossing projections} are projections $P$ in which crossings of varying multiplicities are permitted, and the vertices of $G$ need only be of even degree.

A particular case of interest is the \textit{\"ubercrossing projections}, which are the link projections consisting of a lone multicrossing. When the multiplicity of the lone multicrossing is odd and each strand is connected to both of its neighboring strands in the multicrossing, it is a \textit{petal projection} of a knot. Petal knot projections and \"ubercrossing projections were shown to exist for any knot and link (respectively) in \cite{SMALL2012}, and further studied in \cite{SMALL2013} and \cite{Hass2014}.

When referencing an $n$-crossing $c$ in a projection of a link, it will be useful to refer in a standard way to the strands $s_1,\dots,s_n$ and the levels $l_1,\dots,l_n$ at which each of the strands passes through the $n$-crossing. These labels are shown in Figure \ref{schematic}. Throughout we refer to the levels of adjacent strands $l_i, l_{i+1}$ and the strands corresponding to adjacent levels $s_i, s_{i+1}$; the `wraparound cases' $l_n, l_1$ and $s_n, s_1$ are implied. We keep track of a particular multicrossing by noting the permutation $l_1, \ldots l_n$ obtained by considering the levels of its strands clockwise from above, starting on the top level of the crossing.

\begin{figure}[h]
\centering
    \begin{subfigure}{0.45\textwidth}
    	\centering
        \includegraphics[scale = 0.2]{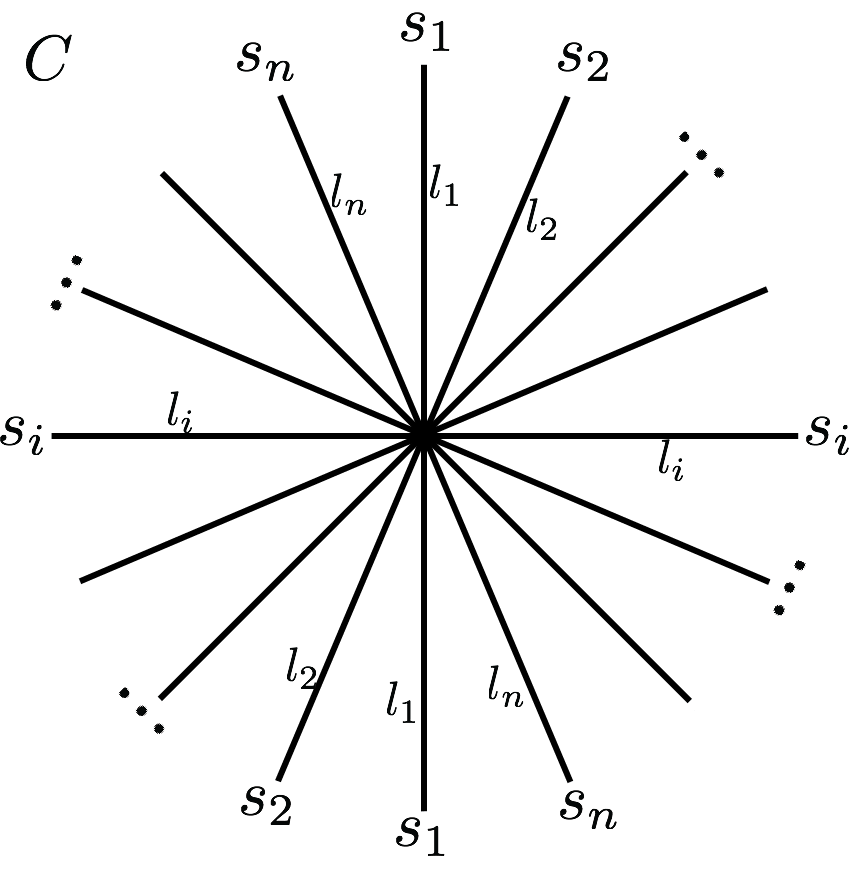}
        \caption{A general multicrossing $c$ of size $n$, with strands and associated strand levels labelled.}
        \label{diagram}
    \end{subfigure}
    \quad
   \quad
    \begin{subfigure}{0.45\textwidth}
    	\centering
        \includegraphics[scale=0.215]{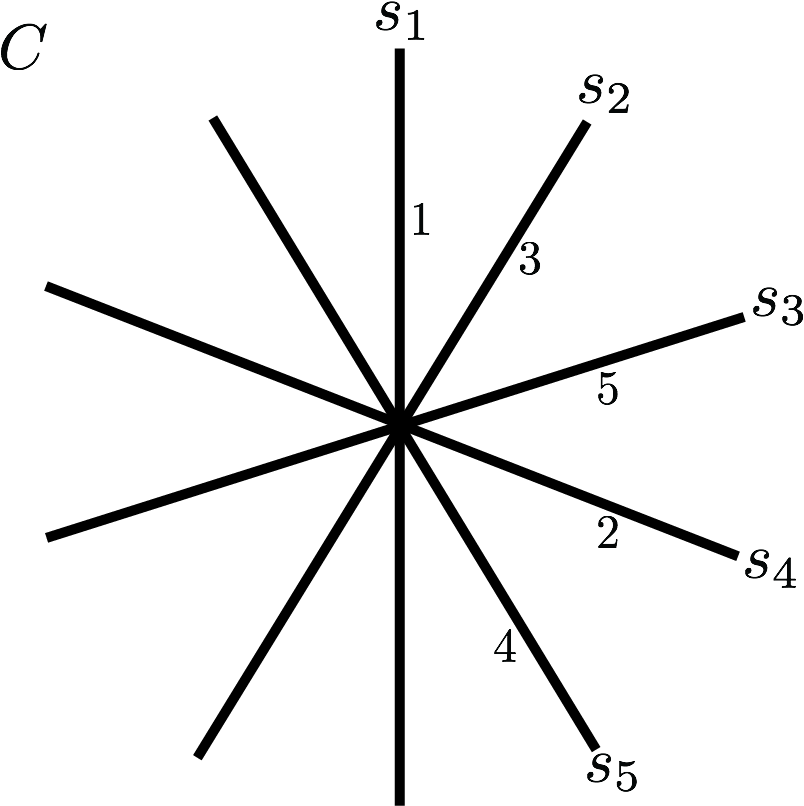}
        \caption{The specific multicrossing 13524, which appears in an \"ubercrossing projection of the figure-eight knot.}
        \label{41diagram}
    \end{subfigure}
\label{schematic}
\caption{multicrossings are represented by their strands and the levels at which the strands enter the crossing. The levels of the strands give a permutation on $n$ elements that encodes the multicrossing.}
\end{figure}

By work of W. Thurston \cite{ThurstonNotes},  the complements of many knots and links admit a hyperbolic metric, and when this is the case, the Mostow/Prasad Rigidity Theorem says the metric is uniquely determined. This implies that geometric invariants of a link complement derived from its  hyperbolic metric are topological invariants of the link. The \textit{hyperbolic volume} of a link, defined to be $vol(L)=vol(S^3\setminus L)$, is particularly discerning invariant for distinguishing among hyperbolic knots and links.  Additionally, it offers an interesting measure of the complexity of a link.

As hyperbolic 3-manifolds, the complements of links produce polyhedral fundamental domains in the universal covering space $\mathbb{H}^3$, where the components of the link correspond to collections of \textit{ideal points} on $\partial\mathbb{H}^3$ ``at infinity''.  The corresponding  decompositions of the manifold $S^3\setminus L$ into hyperbolic polyhedra contain ideal vertices and edges that extend to them. Decompositions can also contain \textit{finite vertices}, which correspond to points in the interior of the link complement. For any such combinatorial polyhedron $P$, there is an upper bound on the hyperbolic volume $vol(P)$ in $\mathbb{H}^3$ which holds across all embeddings of $P$ in $\mathbb{H}^3$ with geodesic faces and edges. We make special use of the maximal hyperbolic volume across all octahedra, $v_{oct}\approx 3.6639$, which is realized by the ideal regular octahedron.

Since each polyhedron has a maximum achievable volume when embedded in $\mathbb{H}^3$,  a decomposition of a link complement into combinatorial polyhedra provides an upper bound on the volume of the link.

D. Thurston showed that given a 2-crossing projection $P$ of a link $L$, the complement $S^3\setminus L$ can be constructed by placing an octahedron at each crossing, where each octahedron's top and bottom vertices are ideal points in the cusps of the upper and lower strands of its crossing, as in Figure \ref{octa}(A) (see \cite{DThurston}). These octahedra are \textit{crossing-centered bipyramids}, which are bipyramids in the complement of a link with finite equatorial vertices and top and bottom vertices that are ideal points at the cusps of adjacent-level strands in a multi-crossing. The equatorial vertices of each octahedron are pulled up and down to finite vertices $U$ and $D$, which sit above and below the projection plane in the complement of the link. The edges extending from the ideal top and bottom vertices of the octahedra become ``vertical'' semi-finite edges from $U$ or $D$ to the cusps after gluing, and the finite equatorial edges of the octahedra become ``vertical'' edges between $U$ and $D$ that pass through each face in the projection. The edge labellings in Figure \ref{octa} depict these operations. The faces of the crossing-centered octahedra glue to the faces of the octahedra at adjacent crossings on the strand, and they together form a decomposition of the link complement into octahedra. This gives an upper bound of
$$vol(L)\leq c(L)v_{oct},$$ where $c(L)$ is the crossing number.

This decomposition and associated bounds have been applied and modified, as in \cite{SMALL2015a}, \cite{Gar} and \cite{Murakami}. These and other bounds are described in \cite{Adams2015}, where the octahedra are each cut into four tetrahedra as in Figure \ref{octa}(B), then recombined about the finite ``vertical'' edges that pass perpendicularly through the faces of $G$ in the projection plane with endpoints $U$ and $D$. The tetrahedra glue together about each of these edges to form a bipyramid that we call a \textit{face-centered bipyramid}, with finite top and bottom vertices at $U$ and $D$ and ideal equatorial vertices corresponding to the cusps.  For a given 2-crossing projection the face-centered bipyramid decomposition offers a more stringent upper bound on the volume of a link than the octahedral decomposition. This is because the volume of a maximum-size hyperbolic bipyramid grows logarithmically in $|B|$, where $|B|$ is the number of equatorial edges of $B$, and is referred to as the {\it size} of $B$. The volume bound derived from this face-centered bipyramid decomposition is referred to as the FCB bound.

\begin{figure}
\centering
   \begin{subfigure}{0.5\textwidth}
    	\centering
        \includegraphics[scale = 0.15]{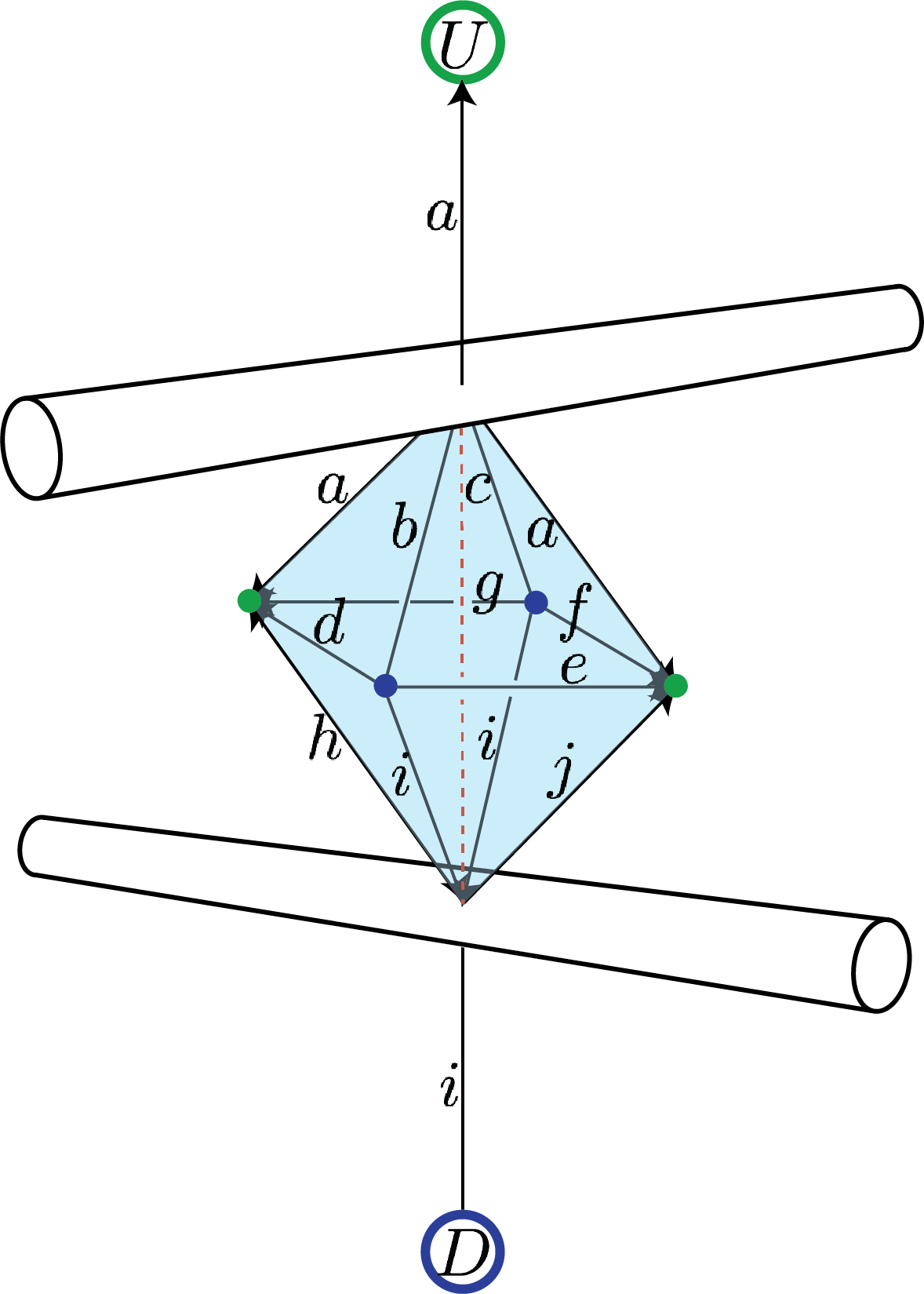}
        \caption{The crossing-centered octahedron.}
     \label{octahedronpic}
    \end{subfigure}
    ~
    \begin{subfigure}{0.5\textwidth}
   	\centering
        \includegraphics[scale=0.15]{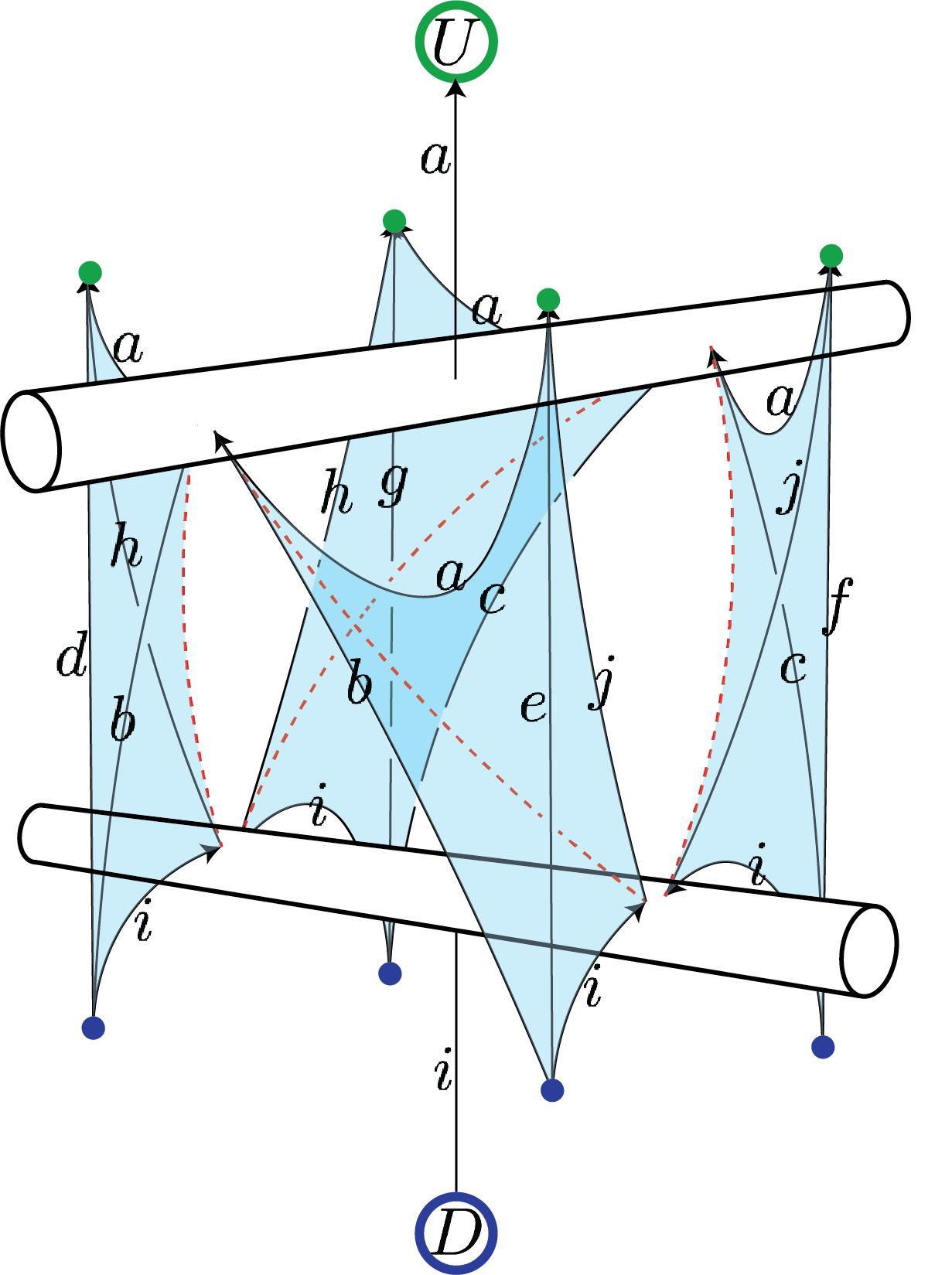}
        \caption{The octahedron is split into tetrahedra.}
       \label{octahedroncutpic}
    \end{subfigure}
\caption{Decomposing the 2-crossing-centered octahedra into tetrahedra to form the face-centered bipyramid decomposition.}
\label{octa}
\end{figure}

In Section 2, given any multicrossing projection of $L$, we develop a dual pair of bipyramid decompositions of the complement of $L$. These are the multicrossing generalizations of the 2-crossing projection decompositions into octahedra at the crossings and into face-centered bipyramids described above.

In Section 3, we consider upper bounds on hyperbolic volume that these decompositions yield in the case when $L$ is hyperbolic. In a subsequent paper, a more in depth analysis of the resulting volume bounds will appear.

In Section 4, we apply these upper bounds, together with properties of the infinite square weave, in order to establish two new planar tiling weaves that contain multicrossings. These are the \textit{triple weave}, consisting of 3-crossings, and the \textit{right triangle weave}, consisting of 2- and 4-crossings. Like the square weave, the triple weave and the right triangle weave realize the maximum possible volume per crossing in any link with these types of crossings. Somewhat surprisingly, the complement manifolds of the minimal finite representations of these three weaves in a thickened torus are homeomorphic to one another.

We would like to thank the referees, who substantially helped to improve the readability of this paper, especially with regard to the proof of Theorem 4.

\section{The Construction}

We first develop the \textit{face-centered bipyramid decomposition}, which holds for all multicrossing projections of all links. The face-centered bipyramids in this decomposition have finite top and bottom vertices and ideal equatorial vertices. We then demonstrate how, as in the 2-crossing case, these face-centered bipyramids can be cut into tetrahedra and reglued into a dual decomposition of the complement into crossing-centered bipyramids.

\subsection{Face-Centered Bipyramid Construction}


\begin{figure}[h]
    \begin{subfigure}[t]{0.3\textwidth}
        \centering
        \includegraphics[scale = .20]{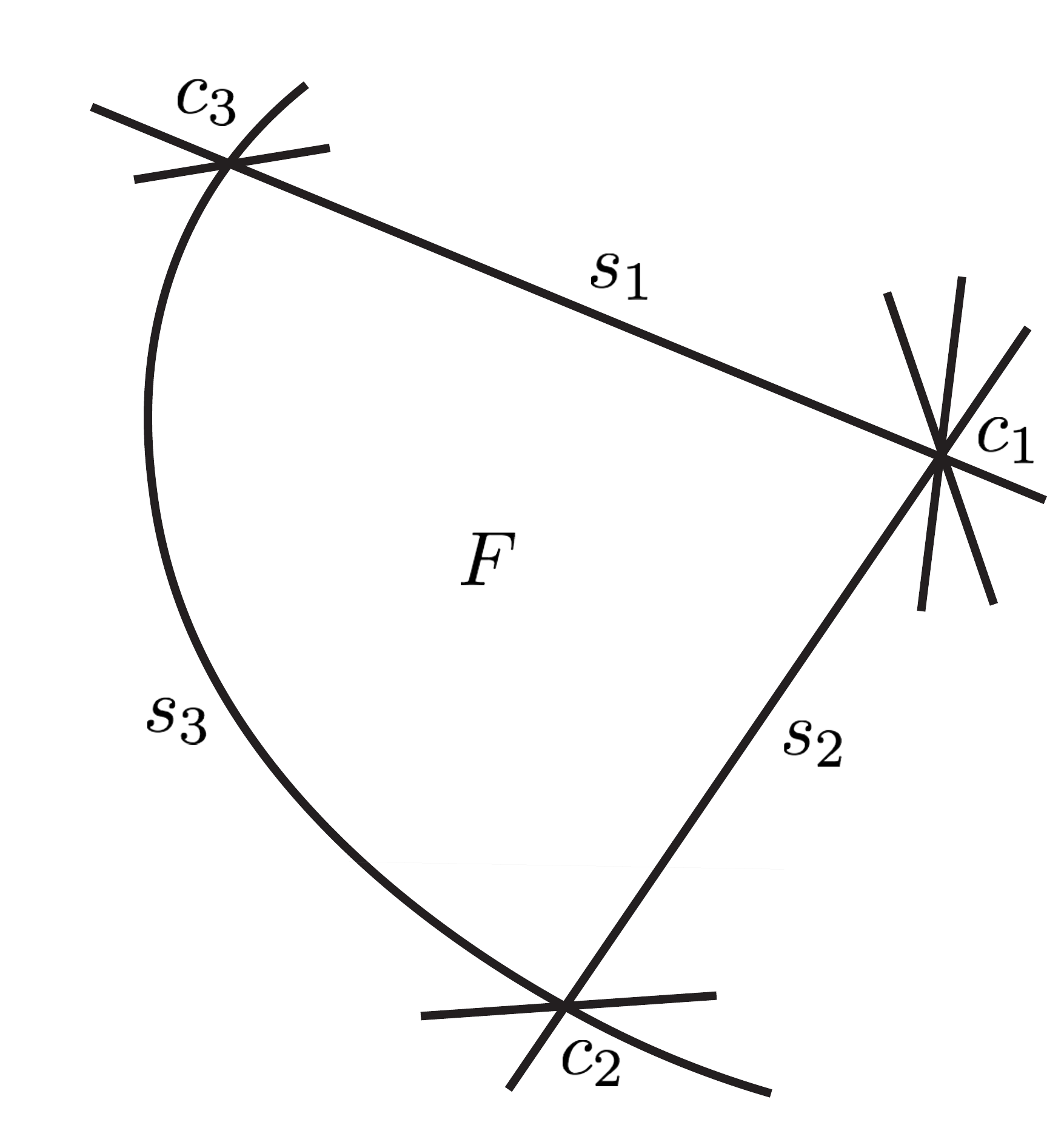}
        \caption{A face of the projection graph.}
        \label{step1face}
    \end{subfigure}
    \hfill
    \begin{subfigure}[t]{0.3\textwidth}
        \centering
        \includegraphics[scale = .20]{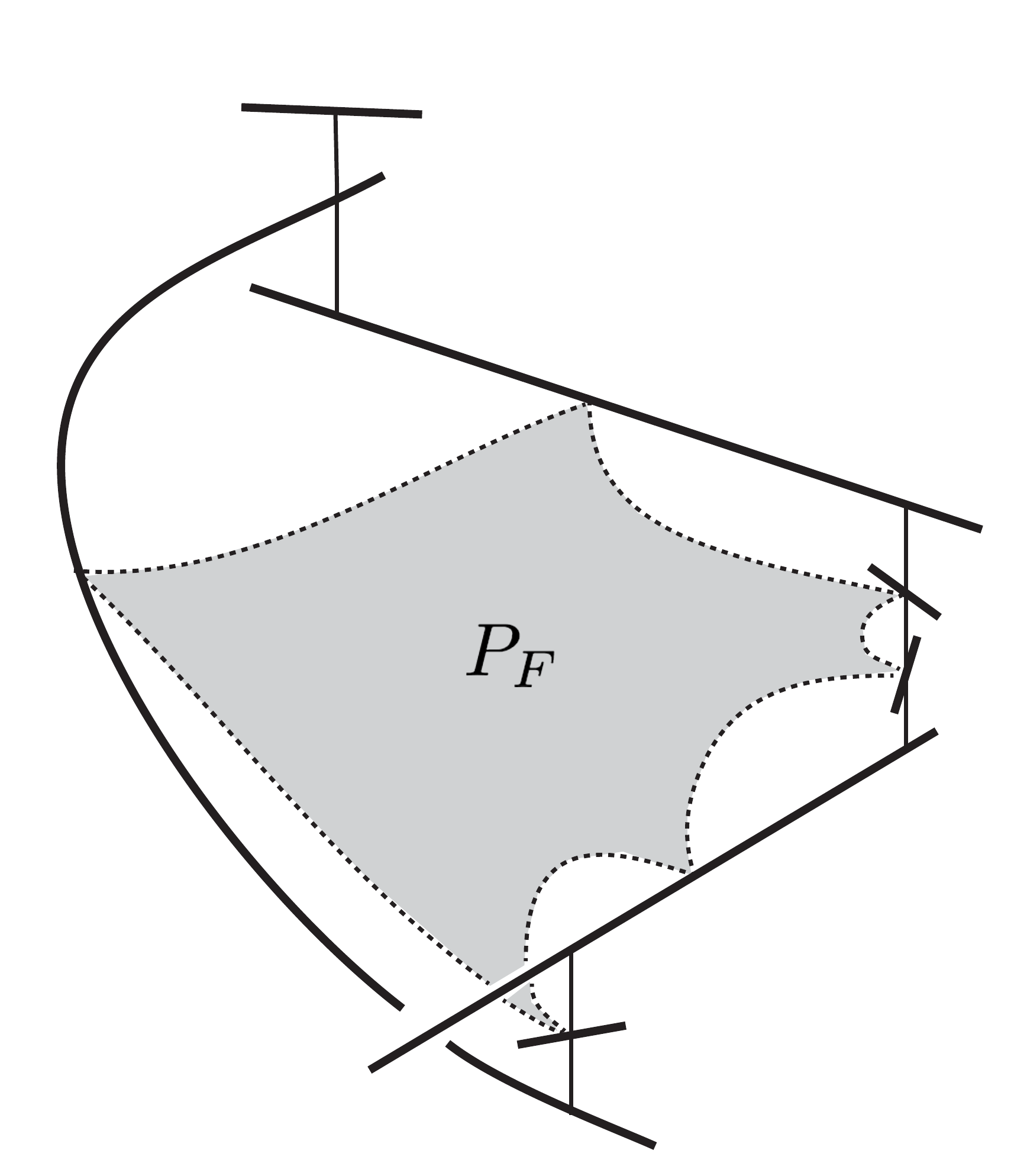}
        \caption{The equatorial edges of a face-centered bipyramid.}
        \label{step1}
    \end{subfigure}
    \hfill
    \begin{subfigure}[t]{0.3\textwidth}
        \centering
        \includegraphics[scale = .20]{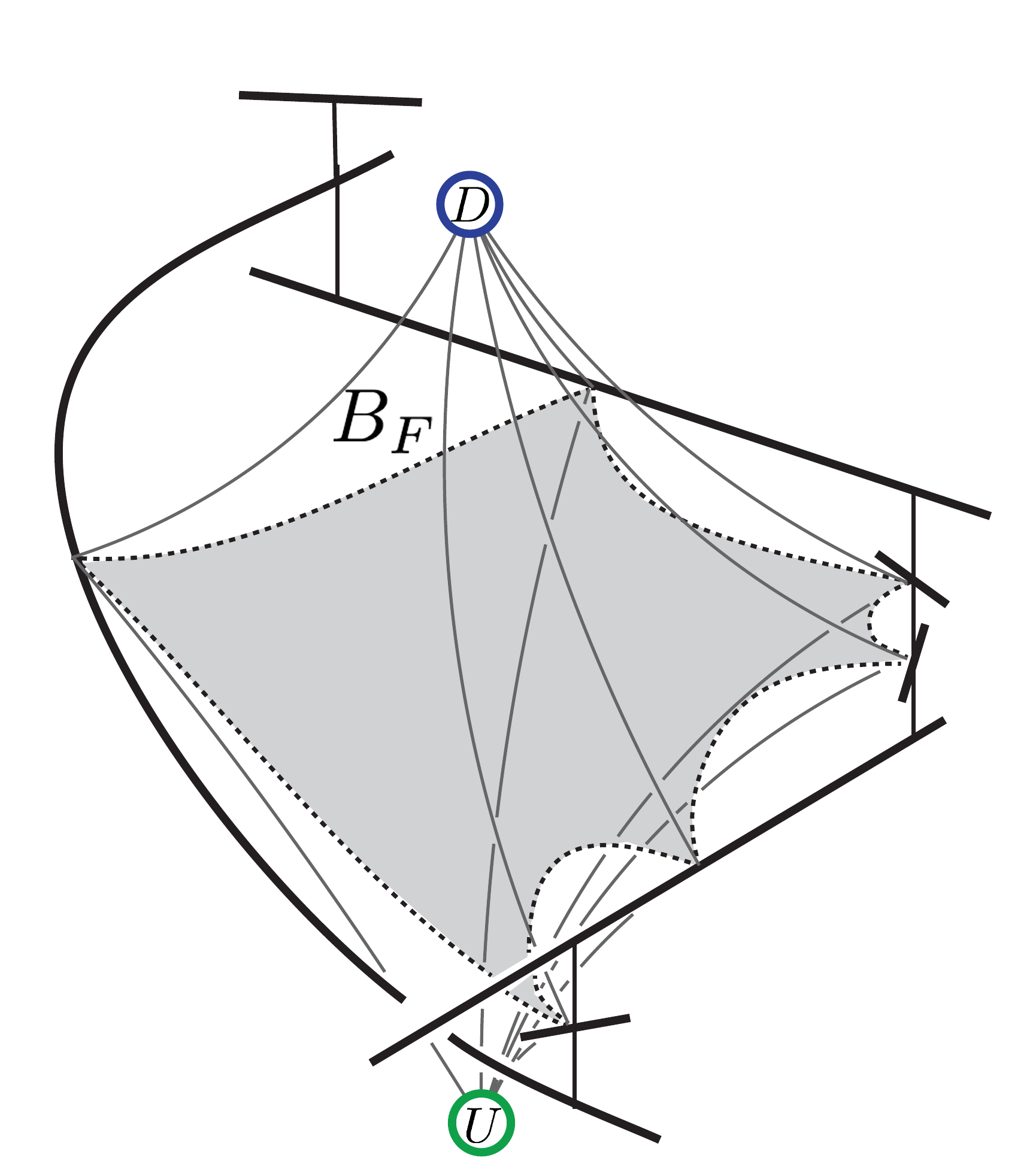}
        \caption{The face-centered bipyramid.}
        \label{step1bip}
    \end{subfigure}

    \caption{The construction of a face-centered bipyramid.}
    \label{facecentered}
\end{figure}

 We begin with a particular fixed multi-crossing projection of $L$ and associated projection graph $G$. In each face $F$ of $G$, such as in Figure \ref{facecentered}(A), we produce a cycle of edges, shown dashed, bounding a polygon $P_F$, as in Figure \ref{facecentered}(B). Each strand of the link around the boundary of $F$ contributes a vertex for $P_F$ and at each crossing on the boundary of $F$, each strand that is between the heights of the two edges of the face at that crossing also contributes a vertex of $P_F$. Then we add a finite vertex $U$ above the projection plane and a finite vertex $D$ below the projection plane, and cone each polygon up to $U$ and down to $D$. The result is a bipyramid $B_F$ corresponding to each face of the projection, including the outermost face, as in Figure \ref{facecentered}(C). We call $P_F$ the equatorial polygon of $B_F$.
 
 We now describe how to glue the faces of these bipyramids together to fill the complement of the link. We depict the construction in detail for a triple-crossing, but the general case is similar. In Figure \ref{triplecentered}(A), we see the top view of a triple crossing with part of each equatorial polygon corresponding to the adjacent face-centered bipyramids. We have also added in the top edges of the bipyramids, which meet at $U$. In Figure \ref{triplecentered}(B), we see a side view of the same crossing.

  \begin{figure}[hb]
\begin{subfigure}[t]{0.475\textwidth}
        \centering
        \includegraphics[scale = .9]{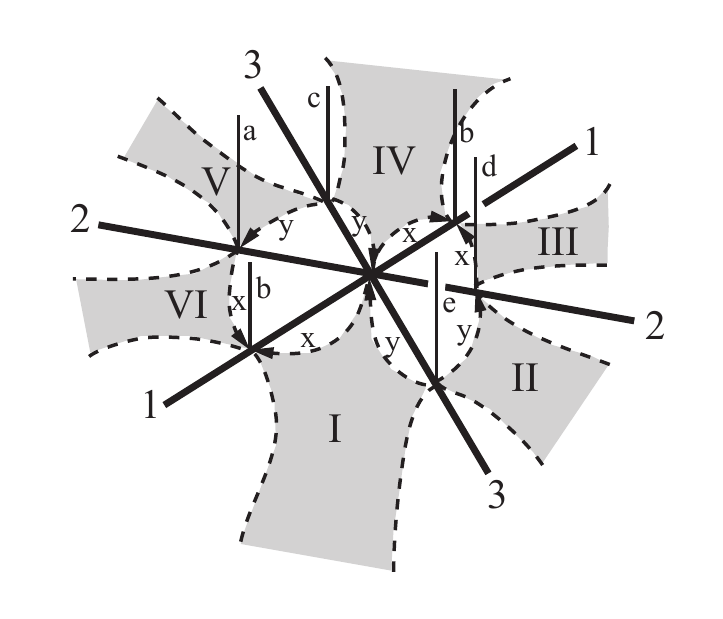}
        \caption{The view from above of how the face-centered bipyramids around a 3-crossing fit together.}
        \label{step2}
\end{subfigure} \hfill
\begin{subfigure}[t]{0.475\textwidth}
    	\centering
       \includegraphics[scale = 0.8]{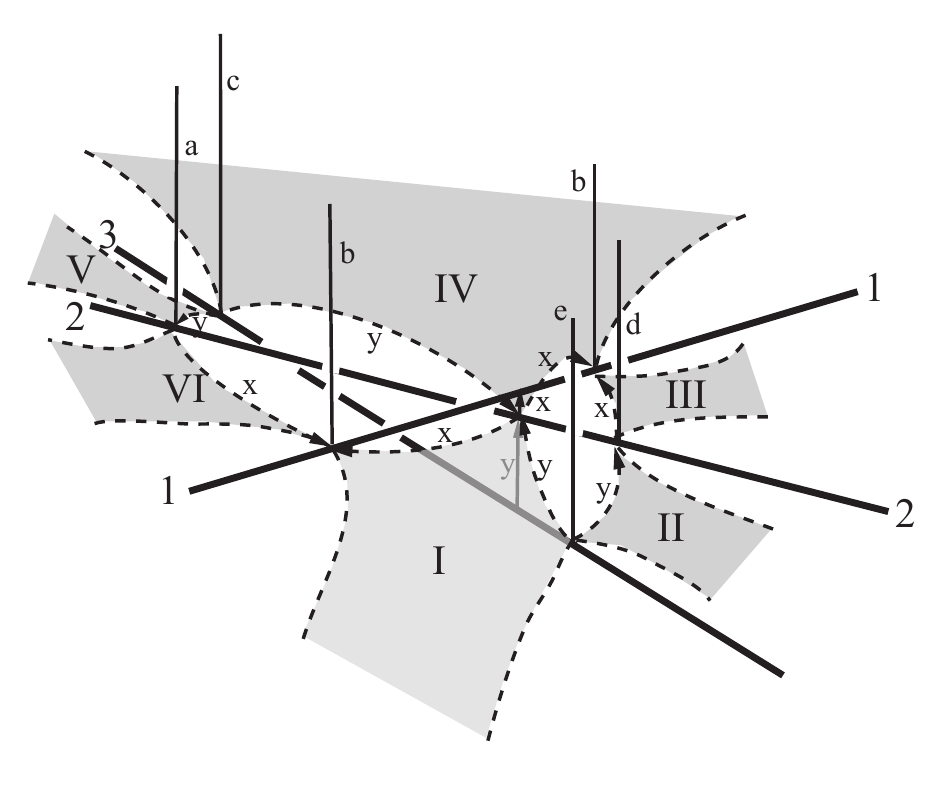}
        \caption{The view from the side of how the face-centered bipyramids around a 3-crossing fit together.}
       \label{step3}
\end{subfigure}
  \caption{The face-centered bipyramid decomposition.}
  \label{triplecentered}
  \end{figure}
  
  Here we can see how the various faces of the top half of the bipyramids glue together. For instance, representing faces by the edge classes on their boundaries, we see face $bxd$ of $I$ will glue to face $dxb$ of $III$. And face $dye$ of $I$ glues to face $eyd$ of $II$. Note that a vertical edge can slide along its link strand if there is no obstruction from another strand to doing so-this is why the two vertical edges coming out of strand 1 are both labelled with $b$.
  
   In general, consider any face-centered bipyramid $B_F$ adjacent to a multicrossing $c$, with $F$ bounded by strands that enter $c$ at levels $j$ and $k$, where the faces and strands about the multicrossing are considered in clockwise order. Consider an upper face $pqr$ of $B_F$, where $p$ extends from $U$ down to the strand in $c$ at level $i+1$, $q$ lies between the strand at level $i+1$ and the strand at level $i$, and $r$ extends from the strand at level $i$ back up to $U$, where $j\leq i<i+1\leq k$. This face will glue to its partner face $rqp$ of $B_F'$, where $B_{F'}$ is the next face-centered bipyramid encountered such that $F'$ is bounded by strands entering $c$ at levels $k'$ and $j'$, where $j' \leq i < i+1 \leq k'$. All upper faces will either be one such $pqr$ or the $rqp$ partner of some $pqr$, and so all upper faces will be paired and glued to fill space above the link.

 Finally, the faces of the bottom halves of the face-centered bipyramids glue together in a similar fashion, filling the entire complement of the link.

The discussion above is summarized in the following theorem:

\begin{thm}\label{facebip}
  Given any link $L$ and any multicrossing projection of $L$ with projection graph $G$, the complement of $L$ can be decomposed into a collection of bipyramids $\mathcal{B}_f = \{B_F :~ F \text{ a face of $G$}\}$. Furthermore, the size of each $B_F \in \mathcal{B}_F$ is given by
\begin{equation}\label{facesize}
|B_F|=\sum_{c_i \in \partial F} \left| l(s_i,c_i)-l(s_{i+1}, c_i) \right|,
\end{equation}
where $\partial F=s_1, c_1,\dots, s_m,c_m$ is the boundary of $F$ and $l(s,c)$ is the level at which strand $s$ enters crossing $c$.
\end{thm}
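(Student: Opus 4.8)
The plan is to treat the construction and gluing recipe set out above as the backbone of the proof and to supply the three verifications it requires: that the face pairing is a well-defined, fixed-point-free involution (so the $B_F$ glue up along \emph{all} of their faces); that the resulting quotient, with its ideal vertices removed, is $S^3\setminus L$; and that the size formula (\ref{facesize}) holds. Since the gluing of the lower halves of the $B_F$ mirrors that of the upper halves, I would carry out the first two points for the upper faces and invoke this symmetry for the lower ones.

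For the involution, I would first observe that every equatorial edge of $P_F$ joins two consecutive vertices lying over a single crossing $c$ of $\partial F$, at adjacent levels $i<i+1$ (traversing $\partial F$ through $c$ one meets a vertex at every level between the two levels at which the flanking strands of $\partial F$ enter $c$, and successive levels give an equatorial edge). Hence every upper face of every $B_F$ is a triangle of the type $pqr$ described before the theorem, determined by a crossing $c$ and an adjacent level pair $(i,i+1)$ that the corner $F$ straddles. Fixing such $c$ and $(i,i+1)$ and listing the corners of $c$ that straddle this pair in clockwise order as $F_1,\dots,F_t$, the two things to check are: (a) $t$ is even and positive, because the cyclic sequence of strand-levels read around $c$ crosses the threshold between $i$ and $i+1$ an even number of times, and at least twice since both levels $i$ and $i+1$ occur at $c$; and (b) consecutive straddling corners carry opposite type, because these threshold crossings alternate in direction, which is precisely the alternation between $pqr$ faces and their $rqp$ partners. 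Together (a) and (b) show that the recipe — glue the slab-face of $F_{2b-1}$ to that of $F_{2b}$ — is a fixed-point-free involution pairing up every upper face, and symmetrically below.

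To identify the resulting quotient, I would assemble it in two symmetric pieces. Gluing the upper pyramids $\mathrm{cone}(P_F,U)$ along their upper faces yields $\mathrm{cone}(\Sigma,U)$, where $\Sigma=\bigcup_F P_F$ is the $2$-complex obtained by identifying equatorial edges in pairs; likewise the lower pyramids assemble to $\mathrm{cone}(\Sigma,D)$. The heart of the matter is to recognize $\Sigma$ as the projection sphere $S^2$ — subdivided, with the points of $L$ appearing as ideal vertices — each $P_F$ filling its face $F$ with the extra near-crossing vertices recording the intermediate strands there. Granting this, the quotient is the suspension of $S^2$, i.e.\ $S^3$, with $U$ and $D$ as interior suspension points, so deleting the ideal vertices leaves $S^3\setminus L$. (Alternatively one can bootstrap this identification from the $2$-crossing case: on a diagram with only $2$-crossings the present construction specializes to the face-centered bipyramid decomposition of \cite{Adams2015}, and a general multicrossing diagram of $L$ is related to such a diagram in a controlled way; but the direct two-halves description appears cleaner.) I expect this step — certifying the glued space as $S^3\setminus L$ rather than merely as a closed manifold — to be the main obstacle.

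Finally, the size bound is a vertex count. By definition $|B_F|$ is the number of equatorial edges of $B_F$, which equals the number of vertices of $P_F$. Writing $\partial F=s_1,c_1,\dots,s_m,c_m$, the construction places one vertex on each strand $s_i$, and at each crossing $c_i$ one vertex for every strand passing through $c_i$ at a level strictly between $l(s_i,c_i)$ and $l(s_{i+1},c_i)$ — of which there are $|l(s_i,c_i)-l(s_{i+1},c_i)|-1$. Here one uses that the two edges of $F$ meeting at a crossing of multiplicity at least $2$ always lie on distinct strands (adjacent edges at a vertex of $G$ come from different strands of that crossing), so these counts are non-negative and the two families of vertices are disjoint. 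Since the boundary walk has as many strand-slots as crossing-slots, summing gives $|B_F|=m+\sum_{i}\big(|l(s_i,c_i)-l(s_{i+1},c_i)|-1\big)=\sum_{c_i\in\partial F}|l(s_i,c_i)-l(s_{i+1},c_i)|$, which is (\ref{facesize}). This part is routine.
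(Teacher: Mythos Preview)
Your proposal is correct and follows essentially the same approach as the paper: the theorem is stated there as a summary of the construction and gluing recipe given in the paragraphs immediately preceding it, and your three verification steps (involution on faces, identification of the quotient as $S^3\setminus L$, vertex count for $|B_F|$) simply make explicit what the paper leaves to geometric intuition and the worked $3$-crossing example. Your suspension description of the glued space and your parity/alternation argument for the face pairing are more careful than anything the paper writes down, but they do not constitute a different route.
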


Note that in general a multi-crossing face-centered bipyramid may consist of more tetrahedra than there are edges bounding the face, whereas in the 2-crossing case $|B_F|$ necessarily equals the number of edges of $F$.

\subsection{Crossing-Centered Bipyramid Construction}

\begin{figure}[t]
\centering
    \begin{subfigure}[t]{0.475 \textwidth}
        \centering
        \includegraphics[scale=0.12]{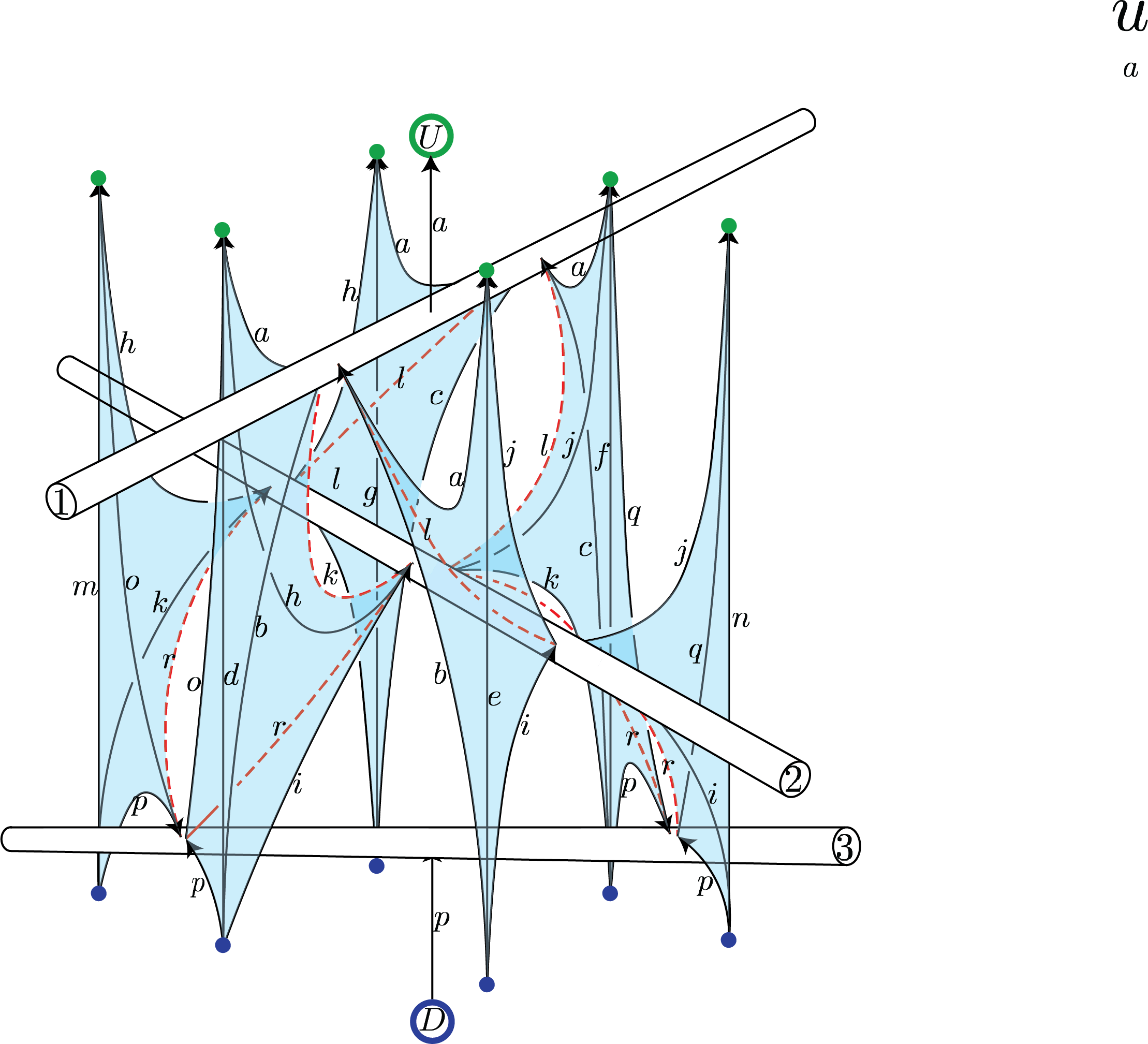}
        \caption{The 3-crossing face-centered decomposition of a link complement.}
        \label{3crossingf}
    \end{subfigure}
    \hfill
    \begin{subfigure}[t]{0.475 \textwidth}
        \includegraphics[scale=0.12]{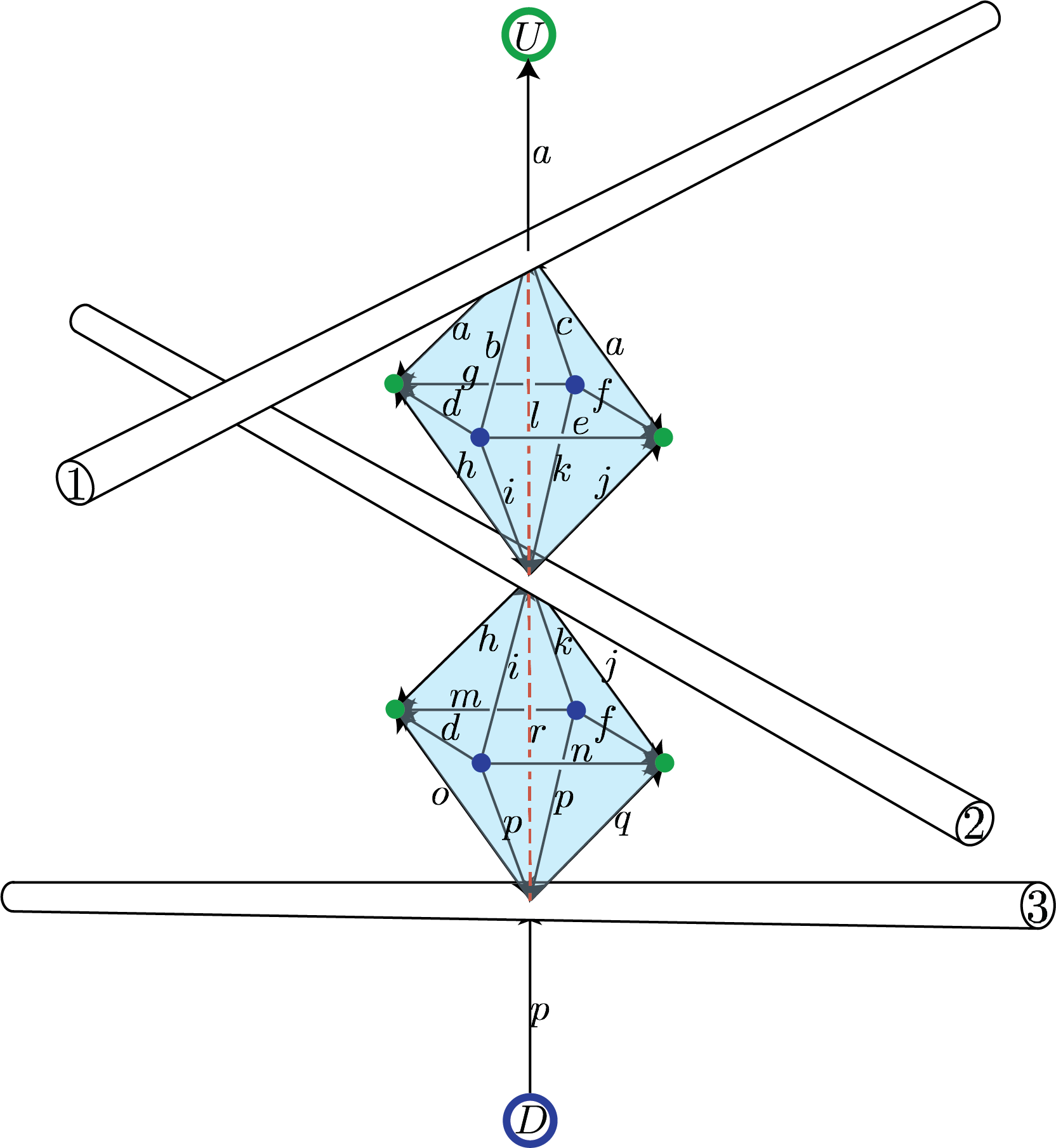}
        \caption{The 3-crossing crossing-centered decomposition of a link complement.}
        \label{3crossingc}
    \end{subfigure}
    \caption{The crossing-centered and face-centered bipyramid decompositions.}
    \label{3crossingbips}
\end{figure}

From the face-centered decomposition we derive the \textit{crossing-centered bipyramid decomposition} by first cutting each face-centered bipyramid $B_F$ into its constituent $|B_F|$ tetrahedra. These tetrahedra share an edge from $U$ to $D$ in the center of $B_F$, and the opposite edge of each tetrahedron lies between two adjacent-level strands in a multicrossing in $\partial F$. For a given $c$, consider the tetrahedra from all face-centered bipyramids neighboring $c$ which have an edge passing between adjacent-level strands of $c$. The 3-crossing case is shown in Figure \ref{3crossingbips}(A) and (B). We regroup these tetrahedra according to which of the two adjacent-level strands in $c$ at levels $i$ and $i+1$ that they touch, and so for each $i$, the edge between the level $i$ and $i+1$ strands is shared by all of the tetrahedra in this group. We can glue each of these groups of tetrahedra together about this shared edge to form bipyramid with top vertex at the level $i$ strand and bottom vertex at the level $i+1$ strand. The size of this crossing-centered bipyramid between levels $i$ and $i+1$ of $c$ is determined by the number of faces neighboring $c$ that contribute tetrahedra to it, since each such face contributes exactly one. This is captured by the following theorem:

\begin{thm}\label{crossingbipsize}
  Given any link $L$ and any projection $P$ of $L$ with multicrossings $C$, the complement of $L$ can be decomposed into a collection of bipyramids $\mathcal{B}_C = \{B_{c, i} :~ c \in C, i\in \{1, \dots , |c| -1 \}\}$. Furthermore, the sizes of these bipyramids are given by
  
\begin{equation}\label{crossingbipsizeeqn}
  |B_{c,i}|=2\left| \left\{j \in \{1, \dots, |c|\}: \min\{l_j, l_{j+1}\} < i + 1/2 < \max\{l_j,l_{j+1}\} \right\} \right|,
\end{equation}
where $c$ is a multicrossing composed of the strands $s_1, \dots, s_j, \dots, s_n$ at crossing levels $l_1, \dots l_j, \dots, l_n$.

\end{thm}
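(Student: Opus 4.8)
The plan is to obtain the crossing-centered decomposition from the face-centered decomposition $\mathcal B_f$ of Theorem~\ref{facebip} by refining and then regrouping. First refine $\mathcal B_f$: subdivide each $B_F$ into its $|B_F|$ tetrahedra by adding the interior edge from $U$ to $D$ and coning it to each equatorial edge of $P_F$. This produces a genuine polyhedral decomposition of $S^3\setminus L$ into tetrahedra, in which every tetrahedron $\tau$ has exactly two finite vertices ($U$ and $D$) and two ideal vertices at cusps; write $E(\tau)$ for the edge of $\tau$ joining its two ideal vertices. By construction $E(\tau)$ is an equatorial edge of some $P_F$, and hence joins the cusps of two strands entering a common multicrossing $c\in\partial F$ at adjacent levels $i$ and $i+1$. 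The key structural point is that the edges of the form $E(\tau)$ are indexed exactly by pairs $(c,i)$ with $c\in C$, $i\in\{1,\dots,|c|-1\}$: around a given multicrossing $c$ the various ``level-$i$ to level-$(i+1)$'' equatorial edges contributed by the face-centered bipyramids near $c$ are all identified, via the face-pairings of the face-centered construction, into one edge $E_{c,i}$ of the decomposition joining the level-$i$ cusp of $c$ to the level-$(i+1)$ cusp. Define $B_{c,i}$ to be the union of all tetrahedra $\tau$ with $E(\tau)=E_{c,i}$.

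Because the refined complex is an honest decomposition, the tetrahedra incident to the edge $E_{c,i}$ are arranged cyclically around it, glued along the faces that contain $E_{c,i}$; in each such tetrahedron those two faces have remaining vertices $U$ and $D$ respectively. Since $U$ and $D$ are the only finite vertices anywhere in the construction, this cyclic fan is precisely a bipyramid $B_{c,i}$ with apexes the two endpoints of $E_{c,i}$ and equatorial polygon cycling $U,D,U,D,\dots$, its equatorial edges being the $U$-to-$D$ edges of the constituent tetrahedra. Every tetrahedron of the refinement lies in exactly one $B_{c,i}$, so $\mathcal B_C=\{B_{c,i}\}$ is a decomposition of $S^3\setminus L$ into bipyramids, and moreover $|B_{c,i}|$ equals the number of tetrahedra incident to $E_{c,i}$. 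Each face-sector at $c$ whose two bounding half-strands enter $c$ at levels $a,b$ with $\min\{a,b\}<i+1/2<\max\{a,b\}$ contributes exactly one such tetrahedron, and these are all of them, so $|B_{c,i}|$ is the number of such sectors.

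It remains to count the sectors. A multicrossing in standard form has its $n$ strands passing straight through, so the clockwise sequence of strand levels at the $2n$ half-edges of $c$ reads $l_1,\dots,l_n,l_1,\dots,l_n$ (Figure~\ref{diagram}). Consequently the $2n$ sectors at $c$ have bounding level-pairs $\{l_1,l_2\},\{l_2,l_3\},\dots,\{l_n,l_1\}$, each occurring exactly twice, and the number that straddle $i+1/2$ is $2\,|\{\,j : \min\{l_j,l_{j+1}\}<i+1/2<\max\{l_j,l_{j+1}\}\,\}|$, which is \eqref{crossingbipsizeeqn}. (In particular each $|B_{c,i}|$ is even, as it must be for the cyclic fan of tetrahedra to close up; and for $n=2$ one recovers a single bipyramid of size $4$ per crossing, the crossing-centered octahedron of D.\ Thurston.)

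The step I expect to carry the weight is the structural point in the first paragraph, together with the assertion in the second that the tetrahedra around $E_{c,i}$ fan around it exactly once: one must check, by direct inspection of the face-pairings from the face-centered construction --- the same bookkeeping illustrated there for the triple crossing --- that around each multicrossing the level-$i$ to level-$(i+1)$ equatorial edges really are all identified into one edge, and that the resulting fan of tetrahedra forms an embedded bipyramid. Care is needed because a vertical edge can slide along its strand, so a priori there are several distinct edges between $U$ and $D$, and between $U$ and a fixed cusp, that must be kept apart; and one should separately verify that the extremal tetrahedra --- those meeting the level-$1$ or level-$|c|$ cusp of $c$ --- glue correctly onto the crossing-centered bipyramids at the neighboring multicrossings along the strand. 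Granting this, the size formula is the elementary count above.
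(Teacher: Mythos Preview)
Your proposal is correct and follows essentially the same route as the paper: refine the face-centered decomposition of Theorem~\ref{facebip} into tetrahedra along the $U$--$D$ axes, regroup the tetrahedra around the edge $E_{c,i}$ between adjacent-level strands, observe that the face-pairings of the face-centered construction are exactly what glue these tetrahedra into a bipyramid, and then count contributions from the $2n$ sectors around $c$ to get the factor of $2$ in \eqref{crossingbipsizeeqn}. Your account is in fact somewhat more explicit than the paper's---you spell out the $U,D,U,D,\dots$ equatorial cycle and the duality that swaps equatorial and central edges---and your closing caveats about checking that the fan around $E_{c,i}$ closes up embedded are exactly the point the paper handles in its final paragraph.
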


\begin{proof}
  The crossing-centered bipyramid $B_{c,i}$ can be cut into a collection of $|B_{c,i}|$ tetrahedra that share the bipyramid's central edge and glue face-to-face around it. Each of these tetrahedra comes from exactly one face-centered bipyramid $B_F$, where $c$ is in $\partial F$. By the construction of the face-centered bipyramids above, if the boundary of $F$ is of the form $\partial F = \dots , s_{j+1}, c, s_j, \dots$, then $B_F$ contributes a tetrahedron to $B_{c,i}$ exactly when either $l_{j+1} < i + 1/2 < l_j$ or $l_{j+1} > i + 1/2 > l_j$. Therefore as the adjacent pairs of strands at levels $i$ and $i+1$ of $c$ are considered in turn, for the two face-centered bipyramids in the two faces bounded by $s_j$ and $s_{j+1}$ and opposite $c$ from one another, either both face-centered bipyramids contribute a tetrahedron to $B_{c, i}$ or neither does.
  
This shows that these collections of tetrahedra are of the stated size; it remains to show that they glue together to form bipyramids. But the gluings that merge these tetrahedra into bipyramids are exactly the gluings that describe how the face-centered bipyramids glue up to fill the complement of $L$. The pairs of triangular faces that meet around the central edge of each face-centered bipyramid are alternating pairs of the partnered upper faces and lower faces of the face-centered bipyramids surrounding $c$. In the construction of the crossing-centered bipyramids from the face-centered bipyramids, the equatorial edges of each become the central edges of the other.
\end{proof}

\begin{figure}[htpb]
\centering
    \begin{subfigure}[b]{0.3\textwidth}
    	\centering
        \includegraphics[scale = 0.4]{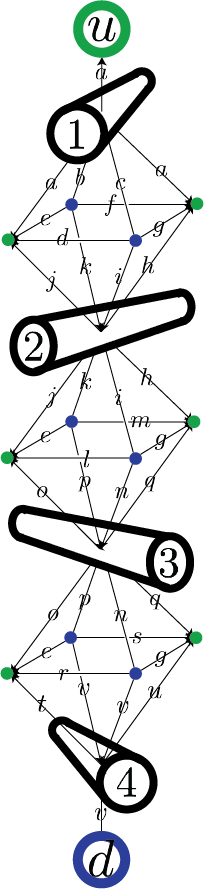}
        \caption{$1234$}
        \label{1234}
    \end{subfigure}
    ~
    \begin{subfigure}[b]{0.3\textwidth}
    	\centering
        \includegraphics[scale=0.4]{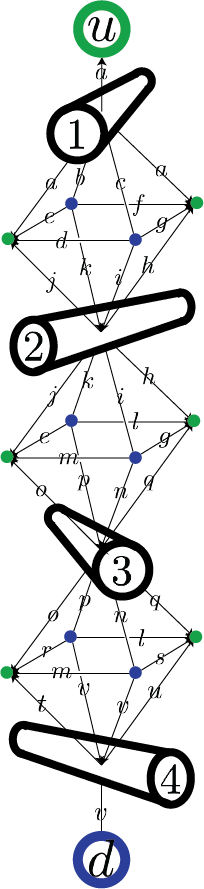}
        \caption{$1243$}
        \label{1243}
    \end{subfigure}
    ~
    \begin{subfigure}[b]{0.3\textwidth}
        \centering
        \includegraphics[scale = 0.4]{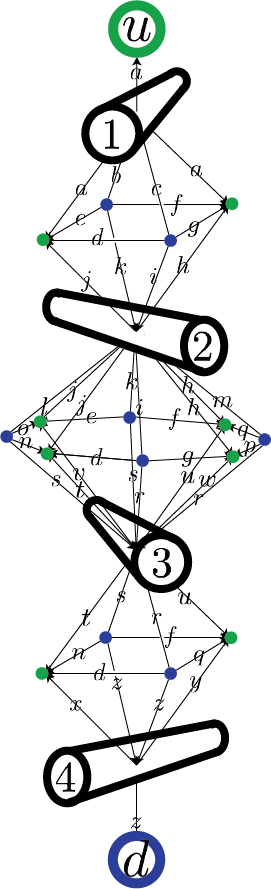}
        \caption{$1324$}
        \label{1324}
    \end{subfigure}
\caption{The six 4-crossing configurations, identified up to reflection by their permutations, and their crossing-centered bipyramid decompositions, with edges labelled by their edge classes.}
\label{bips}
\end{figure}

For convenience, the criterion for whether a given face-centered bipyramid $B_F$ with face boundary $\partial F = \dots, s_j, c, s_{j+1}, \dots$ contributes a tetrahedron to the crossing-centered bipyramid $B_{c, i}$ can be reframed in terms of interval containment in the following way. The interval $[i, i+1]$ represents the position of $B_{c, i}$ in the crossing, and $[l_j, l_{j+1}]$ represents the range of levels of $c$ that are spanned by $B_F$. Then $B_F$ contributes a tetrahedron to $B_{c,i}$ if and only if $[i, i+1] \subseteq [l_j, l_{j+1}]$.

\begin{coro}\label{boundedbipgrowth}
  For any multicrossing $c$, $|B_{c, 1}| = |B_{c, |c|-1}| = 4$ and the sizes of adjacent crossing-centered bipyramids must satisfy
$$\left||B_{c, i}| - |B_{c,i+1}|\right| = 0 \hspace{.1in} \text{or} \hspace{.1in} 4$$
\end{coro}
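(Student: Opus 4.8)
The plan is to argue directly from the size formula of Theorem~\ref{crossingbipsize} (equivalently, from the interval-containment reformulation stated just above the corollary). Write $n = |c|$ and let $l_1, \dots, l_n$ be the level permutation of $c$, with the usual cyclic convention $l_{n+1} = l_1$. For each $i$, let $N_i$ denote the number of cyclically adjacent strand pairs $(s_j, s_{j+1})$ for which the half-integer $i + 1/2$ lies strictly between $l_j$ and $l_{j+1}$, so that $|B_{c,i}| = 2N_i$. The whole statement then reduces to computing $N_1$ and $N_{n-1}$ and to bounding $N_i - N_{i+1}$.

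First I would handle the endpoints. Because the $l_j$ form a permutation of $\{1, \dots, n\}$, a pair contributes to $N_1$ precisely when one of its two levels equals $1$ and the other is at least $2$; since exactly one strand $s_k$ carries level $1$, the only qualifying pairs are $(s_{k-1}, s_k)$ and $(s_k, s_{k+1})$, so $N_1 = 2$ and $|B_{c,1}| = 4$. The computation of $N_{n-1} = 2$ is identical with the level-$1$ strand replaced by the level-$n$ strand.

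For the step-size claim I would compare the contributing sets for $N_i$ and $N_{i+1}$ by looking at their symmetric difference. Writing a generic pair's levels as $\{a,b\}$ with $a < b$ integers, membership in $N_i$ is the condition $a \le i$ and $b \ge i+1$, while membership in $N_{i+1}$ is $a \le i+1$ and $b \ge i+2$. A short case check shows a pair lies in $N_i \setminus N_{i+1}$ iff $b = i+1$ (and then automatically $a \le i$), and in $N_{i+1} \setminus N_i$ iff $a = i+1$ (and then $b \ge i+2$). In either case one of the two levels of the pair equals $i+1$, and since exactly one strand $s_k$ has $l_k = i+1$, the only pairs that can contribute to the symmetric difference are $(s_{k-1}, s_k)$ and $(s_k, s_{k+1})$. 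Each of these lands in exactly one of the two sets, according to whether its other level is less than $i+1$ (giving membership in $N_i \setminus N_{i+1}$) or greater than $i+1$ (giving membership in $N_{i+1} \setminus N_i$). Hence if $t \in \{0,1,2\}$ of these two pairs fall into $N_i \setminus N_{i+1}$, then $N_i - N_{i+1} = t - (2 - t) = 2t - 2 \in \{-2,0,2\}$, so $|B_{c,i}| - |B_{c,i+1}| = 2(N_i - N_{i+1}) \in \{-4, 0, 4\}$, which is the assertion.

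I do not expect a genuine obstacle here; the only bookkeeping is the elementary case analysis that identifies the two symmetric-difference sets, after which the "exactly one strand realizes a given level" observation forces the conclusion. The one point requiring a little care is treating the wraparound pair $(s_n, s_1)$ on the same footing as the others, which the formula of Theorem~\ref{crossingbipsize} already does.
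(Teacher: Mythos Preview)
Your proposal is correct and follows essentially the same approach as the paper's proof: both arguments use the formula of Theorem~\ref{crossingbipsize}, identify the unique strand at the relevant level (level~$1$, level~$n$, or level~$i+1$), observe that only the two cyclically adjacent pairs involving that strand can affect the count, and finish with the same three-case analysis on whether the neighbouring levels lie above or below. Your symmetric-difference formulation and the $N_i$ notation package this a bit more cleanly, but the content is identical.
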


\begin{proof}
To see that $|B_{c,1}| = |B_{c, |c|-1}| = 4$, first consider $B_{c,1}$. If the top strand in $c$ is $s_i$ with level $l_i=1$, then $[l_{i-1}, l_i]$ and $[l_i, l_{i+1}]$ are the only adjacent-strand level intervals containing $[1,2]$. Therefore by Theorem \ref{crossingbipsize}, $B_{c,1}$ is composed of 4 tetrahedra glued face-to-face and sharing a common edge, and it is therefore an octahedron. The bottom bipyramid $B_{c, |c|-1}$ is an octahedron for the same reason.

Within $c$, the sizes of two neighboring crossing-centered bipyramids $B_{c,i}$ and $B_{c,i+1}$ correspond to the frequency with which the intervals $[i,i+1]$ and $[i+1, i+2]$ are contained in the strand level intervals $[l_j, l_{j+1}]$. If $s_k$ is the strand with $l_k=i+1$ that passes between these two bipyramids, then the intervals $[l_j, l_{j+1}]$ will contain either both $[i,i+1]$ and $[i+1, i+2]$ or neither, unless $s_j=s_k$ or $s_{j+1} = s_k$. Therefore, the difference between $|B_{c,i}|$ and $|B_{c,i+1}|$ is determined by how the tetrahedra are allocated from the four face-centered bipyramids around $c$ that are bordered by $s_k$ and correspond to the strand level intervals $[l_{k-1}, i+1]$ and $[i+1, l_{k+1}]$. If $l_{k-1} < l_k$ and $l_{k+1} < l_k$, then these four face-centered bipyramids will contribute four tetrahedra to $B_{c,i}$ and none to $B_{c,i+1}$, and $|B_{c,i}| = |B_{c,i+1}|+4$. If $l_{k-1} > l_k$ and $l_{k+1} > l_k$, then these four tetrahedra will be allocated to $B_{c,i+1}$, and $|B_{c,i}| = |B_{c,i+1}|-4$. And if $l_{j-1} < l_j < l_{j+1}$ or $l_{j-1} > l_j > l_{j+1}$, then the contributions to the two bipyramids will be the same, so $|B_{c,i}|=|B_{c,i+1}|$.
\end{proof}

In Figure \ref{bips}, we see the crossing-centered bipyramid decompositions for all six 4-crossing configurations (identified up to reflection). This includes the first instance of a non-octahedral crossing-centered bipyramid, shown in Figure \ref{bips}(C). In light of the constraints on crossing-centered bipyramid sizes given by Corollary \ref{boundedbipgrowth}, it is natural to ask which sequences of crossing-centered bipyramid sizes are realizable. It turns out that these conditions constitute a classification of the realizable crossing-centered bipyramid size sequences:

\begin{thm}\label{xingalg}
Every sequence $m_1, m_2, \dots,m_{n-1}$ of positive integers such that \\ $m_1=m_{n-1}=4$ and $|m_i- m_{i+1}| = 0 \hspace{.1in}\text{or} \hspace{.1in}4$ is realized as the signature of crossing-centered bipyramid sizes for some $n$-crossing.
\end{thm}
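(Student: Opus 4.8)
The plan is to establish the converse of Corollary~\ref{boundedbipgrowth}: each admissible sequence is realized, and in fact by an explicit construction of the underlying $n$-crossing.

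First I would reduce to a combinatorial statement about permutations. By Theorem~\ref{crossingbipsize} the size $|B_{c,i}|$ depends only on the permutation $l_1,\dots,l_n$ encoding $c$, and any permutation of $\{1,\dots,n\}$ arises from some $n$-crossing in some link projection (draw a lone crossing with the desired strand levels and join the $2n$ strand ends into a diagram). The hypotheses $m_1=4$ and $|m_i-m_{i+1}|\in\{0,4\}$ force every $m_i$ to be a positive multiple of $4$, so write $m_i=4b_i$; then $b_1=b_{n-1}=1$, $b_i\ge 1$, and $|b_i-b_{i+1}|\le 1$. By \eqref{crossingbipsizeeqn}, $2b_i$ is the number of cyclically consecutive pairs $(l_j,l_{j+1})$ with $\min\{l_j,l_{j+1}\}<i+\tfrac12<\max\{l_j,l_{j+1}\}$. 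Viewing $l_1,\dots,l_n$ as the closed polygonal path $\gamma$ that visits the heights $1,\dots,n$ in this cyclic order, this says precisely that $\gamma$ meets the horizontal line $y=i+\tfrac12$ in $2b_i$ points. Setting $b_0=b_n=0$, it therefore suffices to build a closed path $\gamma$ through the $n$ heights meeting $y=i+\tfrac12$ in exactly $2b_i$ points for $i=0,1,\dots,n$.

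The next observation is that such a $\gamma$ is dictated by the first differences of $(b_i)$: the vertex of $\gamma$ at height $i$ is a local minimum, a local maximum, or a pass-through according as $b_i-b_{i-1}$ equals $+1$, $-1$, or $0$; in particular height $1$ must be a local min and height $n$ a local max, consistent with their being extreme. I would then construct $\gamma$ by sweeping downward from height $n$, maintaining a collection of pairwise vertex-disjoint arcs (simple subpaths), each with exactly two free ends hanging below the sweep line, subject to the invariant that just above $y=i+\tfrac12$ there are exactly $b_i$ arcs. To process height $i$: if its prescribed type is \emph{max}, start a new single-vertex arc (two free ends); if \emph{pass-through}, attach vertex $i$ to one free end of one arc; if \emph{min}, attach vertex $i$ to one free end of each of two \emph{distinct} arcs, merging them; and at height $1$, attach vertex $1$ to both free ends of the single remaining arc, closing $\gamma$.

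The one real point to verify is that the \emph{min} step can always be performed, i.e. that two distinct arcs are present: at a min with $i\ge 2$ the invariant gives $b_i=b_{i-1}+1\ge 2$ arcs (using $b_{i-1}\ge 1$), so two distinct ones may be chosen; since merging two distinct simple paths again yields a simple path, and we never join two free ends of a single arc before the final step, no closed loop forms prematurely. At height $1$ the invariant leaves exactly one arc, which therefore contains all of heights $2,\dots,n$, so closing it gives a single closed path through all $n$ heights, meeting $y=i+\tfrac12$ in $2b_i$ points for every $i$. Reading off the resulting cyclic permutation and realizing it as an $n$-crossing completes the argument. I expect the bookkeeping for the sweep invariant together with this ``no premature loop'' point to be the only substantive content; the rest is unwinding Theorem~\ref{crossingbipsize}.
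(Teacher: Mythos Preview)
Your argument is correct and genuinely different from the paper's. The paper proceeds by induction on $\sum m_i$ via two geometric lemmas on multicrossings: one that adds a new overstrand and understrand to a crossing realizing $(m_i)$ to obtain one realizing $4,m_1+4,\dots,m_{n-1}+4,4$, and one that stacks two crossings (deleting the bottom strand of the first and the top strand of the second) to concatenate their sequences at a shared $4$. Any admissible sequence then splits at an interior $4$ or peels off an outer layer of $4$'s, reducing $\sum m_i$. Your approach instead reads Theorem~\ref{crossingbipsize} as a statement about level-crossings of the closed height-path $\gamma$ determined by the permutation, reduces the problem to prescribing the local type (min/max/pass-through) at each height via the first differences of $b_i=m_i/4$, and builds $\gamma$ in one pass by a top-down sweep that maintains $b_i$ open arcs. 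The key nontrivial check---that at a non-bottom local minimum there are at least two arcs to merge, so no cycle closes prematurely---is exactly the point where the hypothesis $b_i\ge 1$ (i.e.\ $m_i\ge 4$) is used, and your verification of it is sound.

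The trade-offs: the paper's proof stays close to the geometry of actual crossings and makes explicit how larger crossings are assembled from smaller ones, which is pleasant if one wants to visualize the construction. Your proof is more direct and more combinatorial: it produces the permutation in a single sweep without induction, and the path/level-line picture makes the role of the first differences transparent. It also makes clear that there is considerable freedom in the construction (which free end to extend, which pair of arcs to merge), so many distinct $n$-crossings realize the same signature.
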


In order to prove this theorem, we first prove two lemmas.

\begin{lem}\label{add4} If $m_1, m_2, \dots,m_{n-1}$ is realized, so is $4, m_1+4, m_2+4, \dots,m_{n-1}+4, 4$.
\end{lem}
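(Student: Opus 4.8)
The plan is to prove the lemma by an explicit move on the level‑permutation of the crossing. Recall from Theorem~\ref{crossingbipsize} and the interval reformulation preceding Corollary~\ref{boundedbipgrowth} that for an $n$‑crossing $c$ with cyclic level sequence $l_1,\dots,l_n$, if we say that a cyclically adjacent pair of strands $\{s_j,s_{j+1}\}$ \emph{covers} the gap $i\in\{1,\dots,n-1\}$ when $[i,i+1]\subseteq[l_j,l_{j+1}]$ (equivalently $\min\{l_j,l_{j+1}\}<i+\tfrac12<\max\{l_j,l_{j+1}\}$), then $|B_{c,i}|=2g_c(i)$, where $g_c(i)$ is the number of adjacent pairs covering gap $i$. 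So it suffices to produce an $(n+2)$‑crossing $c'$ with $g_{c'}(1)=g_{c'}(n+1)=2$ and $g_{c'}(i)=g_c(i-1)+2$ for $2\le i\le n$, since doubling then gives the signature $4,m_1+4,\dots,m_{n-1}+4,4$.

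I would build $c'$ as follows. Since $n\ge 2$ and the $l_j$ are distinct, the cyclic level sequence of $c$ has an ascent: there are cyclically consecutive strands whose levels $a,b$ satisfy $a<b$. Form $c'$ by first raising every level of $c$ by one, and then inserting two new strands, $B$ at level $n+2$ and $T$ at level $1$, between the raised strands at levels $a+1$ and $b+1$, so that the cyclic order of $c'$ reads $\dots,(\text{level }a{+}1),\,B,\,T,\,(\text{level }b{+}1),\dots$ and otherwise agrees with that of $c$. Thus the adjacencies of $c'$ are the $n-1$ old adjacencies other than the $a$–$b$ one, now with levels raised by one, together with the three new adjacencies $\{a{+}1,n{+}2\}$, $\{1,n{+}2\}$, $\{1,b{+}1\}$.

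Next I would compute $g_{c'}$. A pair at old levels $\{u,v\}$, once raised, covers gap $i$ of $c'$ exactly when the original pair covered gap $i-1$ of $c$; in particular no raised pair covers gap $1$ or gap $n+1$. Hence the raised old pairs, including the deleted $a$–$b$ pair, contribute $g_c(i-1)$ to $g_{c'}(i)$ for $2\le i\le n$ and $0$ to $g_{c'}(1)$ and $g_{c'}(n+1)$. It remains to compute the net effect of deleting the raised $a$–$b$ pair and adding the three new pairs. Partitioning $\{1,\dots,n+1\}$ into $\{1,\dots,a\}$, $\{a{+}1,\dots,b\}$, $\{b{+}1,\dots,n+1\}$ and using $a+1<b+1$, one checks that on each block this net effect is exactly $+2$: on $\{a{+}1,\dots,b\}$ the deleted pair, $\{a{+}1,n{+}2\}$, $\{1,n{+}2\}$, and $\{1,b{+}1\}$ contribute $-1+1+1+1=+2$; on $\{1,\dots,a\}$ only $\{1,n{+}2\}$ and $\{1,b{+}1\}$ contribute, giving $+2$; and on $\{b{+}1,\dots,n+1\}$ only $\{a{+}1,n{+}2\}$ and $\{1,n{+}2\}$ contribute, again $+2$. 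Therefore $g_{c'}(1)=g_{c'}(n+1)=2$ and $g_{c'}(i)=g_c(i-1)+2$ for $2\le i\le n$, so $|B_{c',1}|=|B_{c',n+1}|=4$ and $|B_{c',i}|=|B_{c,i-1}|+4=m_{i-1}+4$; that is, $c'$ realizes $4,m_1+4,\dots,m_{n-1}+4,4$, as required.

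The only delicate point—and the one place the argument can break—is the bookkeeping in the last step: the two new strands must be spliced at an \emph{ascent} and in the order $B$ then $T$. At a descent, or with the order reversed, the net effect on the middle block of gaps is $0$ rather than $+2$ and the construction fails, so the existence of an ascent and the correct orientation of the splice are essential. A small example is a good sanity check: splicing into the $3$‑crossing with permutation $1\,2\,3$ (signature $(4,4)$) yields the $5$‑crossing $1\,3\,4\,2\,5$, whose crossing‑centered bipyramid sizes are $(4,8,8,4)$.
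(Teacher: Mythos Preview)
Your proof is correct and follows essentially the same strategy as the paper: build $c'$ from $c$ by inserting two new strands, one at the very top and one at the very bottom, placed adjacently in the cyclic order, and then check that every gap picks up exactly two extra covering pairs. The only cosmetic difference is the splice location: the paper inserts the new overstrand and new understrand immediately clockwise of the old understrand (a canonical descent, with the order $T$ then $B$), whereas you insert at an arbitrary ascent with the order $B$ then $T$; up to reversing orientation these are the same move, and your interval bookkeeping is the cleaner of the two write-ups.
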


\begin{proof} Let $c$ be the $n$-crossing that realizes $m_1, m_2, \dots,m_{n-1}$ with level sequence $l_1,l_2,\dots,l_n$. We create an $(n+2)$-crossing $c'$ by adding a strand above and below $c$ in the following manner. Add the new overstrand clockwise from the understrand of $c$ and the new understrand just clockwise from the new overstrand. The contributions of the intervals $[l_j, l_{j+1}]$ to the sizes of the bipyramids between strands remain unchanged, except for in three cases. Moving clockwise from the old understrand of $c$, the interval between the old understrand of $c$ and the new overstrand of $c'$ will contribute to every bipyramid except the bipyramid above the very bottom strand of $c'$. The interval between the new overstrand and the new understrand will contribute to every bipyramid between strands in $c'$. And the interval between the new understrand of $c'$ and the strand that was clockwise from the understrand in $c$ will contribute to the same set of bipyramids it did before, as well as to the bottom bipyramid above the new understrand. Since each such contribution is doubled when we consider the intervals on the opposite side of the crossing, this means that the sequence of bipyramid sizes for $c'$ is  $4, m_1+4, m_2+4, \dots,m_{n-1}+4, 4$.
\end{proof}

\begin{lem}\label{concatenate} If sequences $p_1, p_2, \dots,p_{u-1}$ and $q_1, q_2, \dots,q_{v-1}$ are realized, then so is %
$p_1, p_2, \dots,p_{u-1}, q_2, \dots, q_{v-1}$.
\end{lem}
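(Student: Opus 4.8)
The plan is to realize $p_1,\dots,p_{u-1},q_2,\dots,q_{v-1}$ by a single $(u+v-2)$-crossing $c$ obtained by \emph{stacking} a crossing $c_q$ realizing $q$ directly beneath a crossing $c_p$ realizing $p$, in the same spirit as the construction in Lemma~\ref{add4}. Give $c_p$ strand levels $1,\dots,u$ and $c_q$ strand levels $1,\dots,v$, and build $c$ from the $u$ strands of $c_p$, placed at levels $1,\dots,u$, together with the $v-2$ lowest strands of $c_q$, placed at levels $u+1,\dots,u+v-2$, while \emph{identifying} the bottom strand of $c_p$ (level $u$) with the second strand of $c_q$ and the second-from-bottom strand of $c_p$ (level $u-1$) with the top strand of $c_q$. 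These two identifications are exactly what bring the strand count down from $u+v$ to $u+v-2$. For the permutation of $c$ one splices the strand-order of $c_q$ into that of $c_p$ at the position occupied by the bottom strand of $c_p$, in the manner prescribed by the order of $c_q$ around its own top strand; concretely, $c$'s cyclic order restricts to that of $c_p$ on the $c_p$-strands and to that of $c_q$ on the $c_q$-strands (such an order exists since these two strand sets overlap in only two strands), and, crucially, it is chosen so that the shared level-$u$ strand is flanked by strands of $c_q$ while the shared level-$(u-1)$ strand is flanked by strands of $c_p$.

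I would then verify the sizes using the interval-containment criterion stated just after Theorem~\ref{crossingbipsize}: $\tfrac12|B_{c,i}|$ is the number of cyclically adjacent strand pairs of $c$ whose level interval contains $[i,i+1]$. For $i\le u-2$ every strand at level $\le i$ is inherited from $c_p$; those adjacencies of $c_p$ that straddle level $i+\tfrac12$ and pass through the bottom strand of $c_p$ are broken in $c$, but each is compensated by a new adjacency joining the same $c_p$-strand to a strand of $c_q$ that straddles exactly the same gaps, so $|B_{c,i}|=|B_{c_p,i}|=p_i$. Symmetrically, for $u\le i\le u+v-3$ every strand at level $\ge i+1$ comes from $c_q$, and the same bookkeeping yields $|B_{c,i}|=|B_{c_q,\,i-u+2}|=q_{i-u+2}$.

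The heart of the matter is the ``seam'' gap $i=u-1$ between the two shared strands, where one must check that precisely two adjacent pairs straddle level $u-\tfrac12$, so that $|B_{c,u-1}|=4$, agreeing with both $p_{u-1}$ and $q_1$ (which equal $4$ by Corollary~\ref{boundedbipgrowth}). The idea is that every strand at level $\ge u$ other than the shared level-$u$ strand comes from $c_q$ and lies at level $\ge u+1$; so, with the chosen cyclic order, any adjacency touching the shared level-$(u-1)$ strand has both endpoints at level $\le u-1$ and any adjacency touching the shared level-$u$ strand has both endpoints at level $\ge u$, leaving exactly the two adjacencies at the interfaces between the block of $c_p$-strands and the block of $c_q$-strands as the pairs straddling the seam. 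I expect the main obstacle to be exactly this: arranging the cyclic order so that the seam is crossed exactly twice rather than an inflated number of times---this is where the hypotheses $p_{u-1}=q_1=4$ (that the bottom bipyramid of $c_p$ and the top bipyramid of $c_q$ are octahedra) get used essentially. The degenerate cases are painless: if $u=2$ or $v=2$ the claimed sequence is literally $q$ or $p$, already realized; and in any configuration where the interleaving above cannot be carried out (for instance when the representative $c_p$ is forced to have its two bottom strands cyclically adjacent), one can instead peel off the boundary octahedron, or observe that $p$ or $q$ then has an interior entry equal to $4$, so that $p_1,\dots,p_{u-1},q_2,\dots,q_{v-1}$ splits as a concatenation of strictly shorter realized sequences and one concludes by induction.
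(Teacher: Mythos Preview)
Your construction, once the cyclic order is pinned down, coincides with the paper's: the paper simply \emph{removes} the bottom strand of $c_1$ and the top strand of $c_2$, then inserts all of $c_2$ (minus its top) as a contiguous block into $c_1$'s cyclic order at the slot vacated by $\text{bot}(c_1)$, with the top of $c_2$ nearest that slot. If you rotate your block order $(P,\gamma_1,\alpha_1,Q,\alpha_2,\gamma_2)$ to read $(\alpha_1,Q,\alpha_2,[\gamma_2,P,\gamma_1])$, the bracketed run is exactly $c_q$ minus its top, sitting where $\text{bot}(c_p)$ was; your ``identification'' of $Q$ with $\text{top}(c_q)$ is then automatic rather than an extra datum. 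So the two approaches are the same construction described differently, and the interval bookkeeping you outline matches the paper's replacement of the four boundary intervals $[j,u],[k,u],[1,r],[1,s]$ by the two long intervals $[j,s+u-2],[k,r+u-2]$.

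Two points to tighten. First, your three conditions (restricts to $c_p$, restricts to $c_q$, and the flanking condition on $P,Q$) do \emph{not} determine the cyclic order: any shuffle of $\gamma_1$ with $\alpha_1$ that keeps a $\gamma$ next to $P$ and an $\alpha$ next to $Q$ satisfies them, and such shuffles create extra $(\alpha,\gamma)$ adjacencies, each of which straddles the seam and inflates $|B_{c,u-1}|$ beyond $4$. You need to state the \emph{block} form explicitly; your later phrase ``interfaces between the block of $c_p$-strands and the block of $c_q$-strands'' shows you intend it, but the formal conditions you wrote don't enforce it. Second, the degenerate-case fallback is both unnecessary and not well-founded: the block construction already gives the correct seam count even when some $\alpha_i$ or $\gamma_i$ is empty (check it---the two cross-adjacencies persist), and the proposed induction (``splits as a concatenation of strictly shorter realized sequences'') invokes the very lemma you are proving without a stated inductive framework.
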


\begin{proof} Note that both realized sequences begin and end with 4's, so in the concluding sequence, the last 4 of the first sequence given by $p_{u-1}$ is identified with the beginning 4 of the second sequence, given by $q_1$.  Let $c_1$ and $c_2$ be multicrossings realizing the two given sequences. Construct a new $(n+q-2)$-crossing $c_3$ by starting with the first crossing and then placing directly beneath it the second crossing, such that from above, the entire second crossing appears in the two opposite regions just clockwise from the bottom strand in $c_1$. Moreover, do so such that the topmost strand of $c_2$  is clockwise from the bottom strand of $c_1$. (See Figure \ref{concatenation}.) Now remove the bottom strand of $c_1$ and the top strand of $c_2$ to obtain our new crossing $c_3$.

\begin{figure}[htpb]
\centering
           \includegraphics[scale=.8]{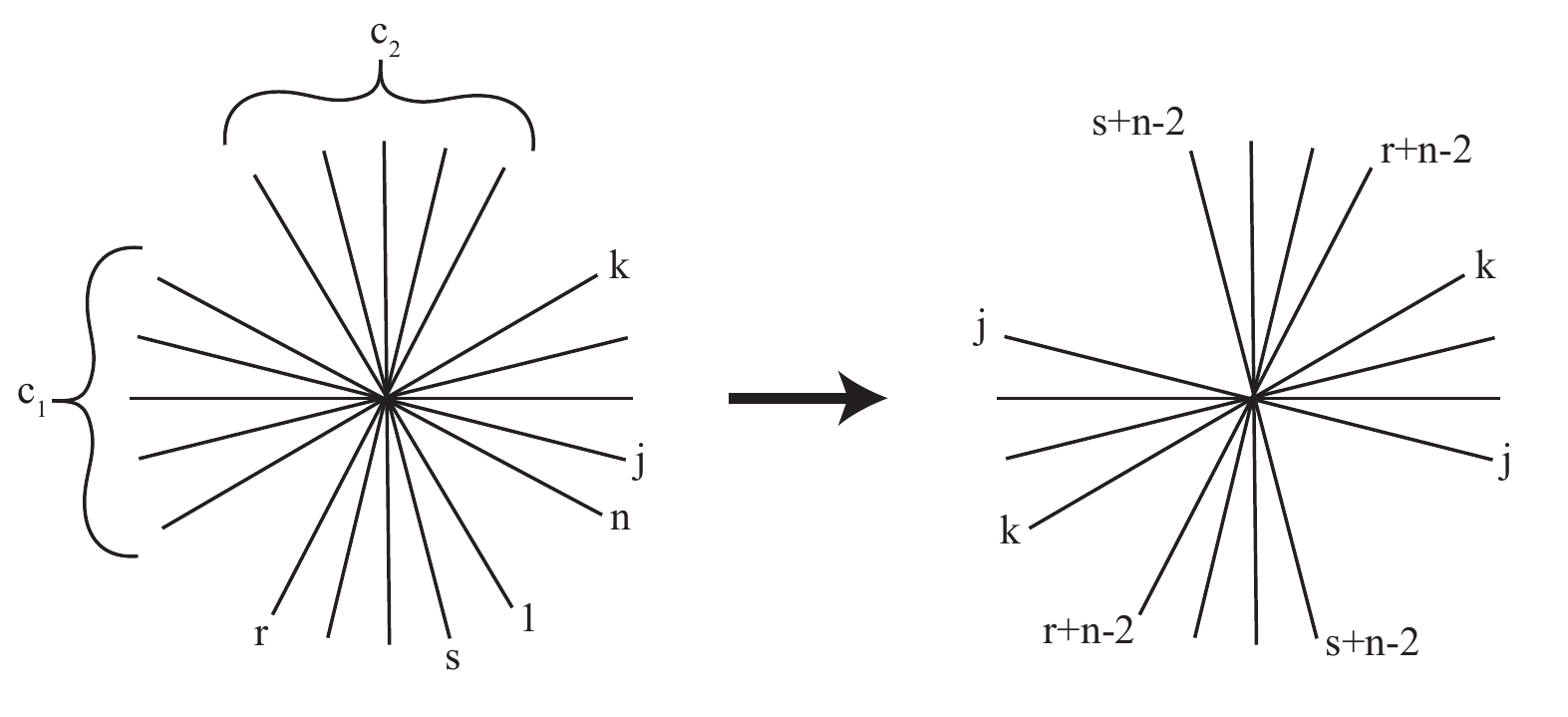}
\caption{Creating crossing $c_3$ from $c_1$ and $c_2$ to``concatenate" bipyramid size seqences.}
  \label{concatenation}
\end{figure}

        Let $j$ and $k$ be the heights of the strands counterclockwise and clockwise from the bottom strand in $c_1$. Let $r$ and $s$ be the heights of the strands  counterclockwise and clockwise from the top strand in $c_2$. In $c_3$, the only intervals originally from $c_1$ with their contributions to bipyramids between strands affected are $[j,n]$ and $[k,n]$. Similarly, the only intervals originally from $c_2$ with their contributions to bipyramids between strands affected are $[1,r]$ and $[1,s]$. In $c_3$, we also have the new intervals $[j, s+n-2]$ and $[k, r+ n-2]$. Then the contributions to the sizes of bipyramids by $[j,n]$ and $[1,s]$ in $c_1$ and $c_2$ are exactly replaced by the contributions from $[j, s+n-2]$, with the exception that there is a single intermediate bipyramid that is contributed to rather than separate bipyramids at the bottom of $c_1$ and the top of $c_2$.

   The same holds for replacing the contributions of intervals $[k,n]$ and $[1,r]$ by \\$[k, r+n-2]$. Hence $c_3$ realizes the desired sequence.
\end{proof}

\begin{proof}[Proof of Theorem \ref{xingalg}]
  We induct on the sum of the bipyramid size sequence $\sum_{i = 1}^{n-1} m_i$ in a sequence $\{m_i\}$. We can realize the single integer sequence $\{4\}$ with a 2-crossing. Suppose that we can realize all sequences $\{m_i\}$ such that 
    \begin{enumerate}
        \item $m_1 = m_{n-1} = 4 \label{4end}$,
        \item $|m_i - m_{i+1}| \leq 4$ for all $i$ \label{smallgap}, and
        \item $\sum_{i = 1}^{n-1} m_i \leq 4t$ \label{smallsum}.
    \end{enumerate}

    Then given a sequence $\{m_i\}$ satisfying (\ref{4end}) and (\ref{smallgap}) and for which $\sum_{i = 1}^{n-1} m_i = 4(t+1)$, either $\{m_i\}$ contains a 4 that is not at the beginning or end, or it does not. If it does, then $\{m_i\}$ is of the form $p_1, \dots , p_{k-1}, q_2, \dots, q_{l-1}$ for two sequences $\{p_i\}$ and $\{q_i\}$ that both satisfy (\ref{4end}), (\ref{smallgap}), and (\ref{smallsum}). These sequences are therefore realizable, and so $\{m_i\}$ is realizable by Lemma \ref{concatenate}.
  
    If $\{m_i\}$ does not contain a 4 in its interior, then it is of the form $4, p_1, 4+p_2, \dots 4+p_{n-1}, 4$ for some sequence $\{p_i\}$ satisfying (\ref{4end}), (\ref{smallgap}), and (\ref{smallsum}). Therefore $\{p_i\}$ is realizable, and so by Lemma \ref{add4} $\{m_i\}$ is realizable as well.

\end{proof}


This crossing-centered bipyramid decomposition agrees with the construction used in \cite{Adams2012} Theorem 5.2, which shows that for a link $L$ in a 3-crossing projection, the complement can be decomposed into pairs of octahedra positioned between the strands of each 3-crossing, as in Figure \ref{3crossingbips}(B).

\section{Hyperbolic Volume Bounds}

In hyperbolic space $\mathbb{H}^3$, for each fixed $n$ there is a maximum $n$-bipyramid volume. Therefore, given a decomposition of a hyperbolic link complement $S^3\setminus L$ into bipyramids, the volume
of the entire manifold $S^3\setminus L$ is bounded above by the sum of the maximum possible volumes of each of its constituent bipyramids. We pursue this strategy in order to develop upper bounds for volumes of hyperbolic link complements,  given their multicrossing projections and the corresponding bipyramid decompositions developed in Section 2. To begin, we know from  \cite{Adams2015} Theorem 2.2 that the volumes of these maximal size-$n$ bipyramids, here denoted $B_n$, grow logarithmically in $n$:
\begin{thm} \label{logbound}
$vol(B_n)<2\pi\log(n/2)$ for $n\geq 3$ and $vol(B_n)$ grows asymptotically like $2\pi\log(n/2)$:
$$\lim_{n\rightarrow \infty} \frac{vol(B_n)}{2\pi\log(n/2)}=1.$$
\end{thm}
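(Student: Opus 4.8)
The plan is to reduce the statement to a closed form for $vol(B_n)$ in terms of the Lobachevsky function $\Lambda(\theta)=-\int_0^\theta\log|2\sin t|\,dt$, and then read off both the inequality and the asymptotics from the behaviour of $\Lambda$ near $0$ and near $\tfrac{\pi}{2}$. First I would pin down $B_n$: using the standard fact that, for a fixed combinatorial polyhedron, the supremum of hyperbolic volume over geodesic realizations is attained in the all‑ideal limit (a consequence of the Schl\"afli differential formula / monotonicity of volume under straightening), together with a symmetrization argument that moves the two apexes to be antipodal with respect to the circle through the equatorial ideal points and then averages over the cyclic symmetry, one sees that $B_n$ is the ideal bipyramid over the regular ideal $n$-gon with apexes at its two ``poles''. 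Slicing this bipyramid by the $n$ half‑planes through its axis yields $n$ mutually congruent ideal tetrahedra, each containing the axis edge. Placing the apexes at $0$ and $\infty$ in the upper half‑space model and a consecutive pair of equatorial vertices at $1$ and $e^{2\pi i/n}$ on the unit circle, one computes that the dihedral angle of such a tetrahedron along the axis is $\tfrac{2\pi}{n}$, while its two remaining dihedral‑angle classes are equal (by the reflective symmetry $z\mapsto e^{2\pi i/n}\bar z$ of the tetrahedron) and hence each equal $\tfrac12\!\left(\pi-\tfrac{2\pi}{n}\right)=\tfrac{\pi}{2}-\tfrac{\pi}{n}$. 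Therefore
$$vol(B_n)=n\left[\Lambda\!\left(\tfrac{2\pi}{n}\right)+2\,\Lambda\!\left(\tfrac{\pi}{2}-\tfrac{\pi}{n}\right)\right].$$
(One can alternatively see the regular bipyramid is extremal among ideal bipyramids with antipodal apexes directly: the axis angles $\alpha_1,\dots,\alpha_n\in(0,\pi)$ sum to $2\pi$, the $i$-th tetrahedron has volume $f(\alpha_i):=\Lambda(\alpha_i)+2\Lambda\!\left(\tfrac{\pi-\alpha_i}{2}\right)$, and $f'(\alpha)=-\log\!\left(2\sin\tfrac{\alpha}{2}\right)$, $f''(\alpha)=-\tfrac12\cot\tfrac{\alpha}{2}<0$, so $\sum_i f(\alpha_i)$ is Schur‑concave and maximal at $\alpha_i=\tfrac{2\pi}{n}$.)

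Next I would substitute the asymptotics of $\Lambda$. From $\Lambda'(\theta)=-\log|2\sin\theta|$ one gets $\Lambda(\theta)=\theta-\theta\log(2\theta)+O(\theta^3)$ as $\theta\to 0^{+}$, while $\Lambda(\tfrac{\pi}{2})=0$ (oddness plus $\pi$-periodicity of $\Lambda$), $\Lambda'(\tfrac{\pi}{2})=-\log 2$ and $\Lambda''(\tfrac{\pi}{2})=0$ give $\Lambda(\tfrac{\pi}{2}-\tfrac{\pi}{n})=\tfrac{\pi}{n}\log 2+O(n^{-3})$. Plugging these into the displayed formula and using $\log 2-\log(4\pi)=-\log(2\pi)$,
$$vol(B_n)=2\pi\log n-2\pi\log(2\pi)+2\pi+O(n^{-2})=2\pi\log(n/2)+2\pi(1-\log\pi)+O(n^{-2}).$$
Since $1-\log\pi<0$, the correction $2\pi(1-\log\pi)+O(n^{-2})$ is negative for all large $n$, which yields $vol(B_n)<2\pi\log(n/2)$ for such $n$ and makes the limit of the ratio manifestly $1$. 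To obtain the inequality for every $n\ge 3$, I would retain explicit bounds on the $O(\theta^3)$ and $O(n^{-3})$ remainders (both easy, as $\Lambda$ is analytic near $0$ and near $\tfrac{\pi}{2}$): the net error is a small positive multiple of $n^{-2}$, which the constant $2\pi(\log\pi-1)$ already dominates at $n=3$; alternatively one just evaluates the closed formula at the finitely many small $n$.

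I expect the only substantive step to be the identification of $B_n$ with the totally symmetric ideal bipyramid — that is, proving both that the extremal bipyramid is ideal and that it is the regular one. Everything afterwards is a routine estimate on the Lobachevsky function, and neither the bound $vol(B_n)<2\pi\log(n/2)$ nor the asymptotic limit should offer resistance once the closed formula is in hand.
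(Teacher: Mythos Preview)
The paper does not prove this theorem at all: it is simply quoted from \cite{Adams2015}, Theorem~2.2, and used as input for the volume bounds in Section~3. So there is no in-paper argument to compare against; your proposal is in fact a sketch of the proof in the cited reference. The closed form $vol(B_n)=n\bigl[\Lambda(2\pi/n)+2\Lambda(\pi/2-\pi/n)\bigr]$ and the expansion $vol(B_n)=2\pi\log(n/2)+2\pi(1-\log\pi)+O(n^{-2})$ are both correct, and they give the inequality (since $\log\pi>1$) and the limit immediately.

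One small point deserves tightening. In an ideal bipyramid with geodesic faces, the faces are all triangles and there is no equatorial \emph{face}, so the equatorial ideal vertices need not lie on a common circle; your symmetrization step and your alternative concavity argument both tacitly assume they do (you write each slice volume as $f(\alpha_i)=\Lambda(\alpha_i)+2\Lambda((\pi-\alpha_i)/2)$, which already presupposes the isoceles shape). The fix is easy and fits your outline: slice along the apex--apex diagonal into $n$ ideal tetrahedra with axis angles $\alpha_i$ summing to $2\pi$; for each, with $\beta_i+\gamma_i=\pi-\alpha_i$, concavity of $\Lambda$ gives $\Lambda(\beta_i)+\Lambda(\gamma_i)\le 2\Lambda((\pi-\alpha_i)/2)$, hence volume $\le f(\alpha_i)$; then your computation $f''(\alpha)=-\tfrac12\cot(\alpha/2)<0$ yields $\sum f(\alpha_i)\le n\,f(2\pi/n)$, with equality realized by the regular ideal bipyramid. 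With that adjustment the argument is complete.
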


    In a subsequent paper, we will explore the volume bounds to which these decompositions give rise and consider techniques for improving upon these bounds.

    Here we note that the multicrossing-centered bipyramid decomposition for a hyperbolic link $L$ in a multi-crossing projection, the derived crossing-centered bipyramids $\mathcal{B}_c$ give an upper bound on the volume of $L$. This bound is
\begin{equation}\label{MCCB}
  vol(L) < \sum_{B \in \mathcal{B}_C} vol(B_{|B|}),
\end{equation}
where for $B \in \mathcal{B}_C$, $|B|$ is given by Theorem\ref{crossingbipsize}. This multicrossing crossing-centered bipyramid bound will be referred to as the MCCB bound on volume.

Similarly, the multicrossing face-centered bipyramid decomposition also gives us an upper bound on volume. For a given link $L$ in a multi-crossing projection with derived face-centered bipyramids $\mathcal{B}_F$, this bound is
\begin{equation} \label{MFCB}
  vol(L)< \sum_{B \in \mathcal{B}_F} vol(B_{|B|}),
\end{equation}
where for $B \in \mathcal{B}_C$, $|B|$ is given by Theorem \ref{facebip}. This multicrossing face-centered bipyramid bound will be referred to at the MFCB bound on volume.

Note that for both the face-centered and crossing-centered bipyramid decompositions, the specific configurations of the $n$-crossings affect the sizes of the bipyramids, and as a result the MCCB and MFCB bounds depend on the crossing configurations. Certain crossing configurations yield larger volumes and volume upper bounds than others. This variation will be investigated further in the subsequent paper. Note also that if we apply the Thurston 2-crossing octahedral upper bound on volume to a multi-crossing projection, then for each $n$-crossing it gives an upper bound of ${n \choose 2} v_{oct}$. This is because each $n$-crossing must be perturbed into ${n \choose 2}$ 2-crossings. On the other hand, the MCCB bound applied to an $n$-crossing gives an upper bound of $(n-1) v_{oct}$ in the best case, and an upper bound that is $O(n \log n)$ in the worst case. Table \ref{boundcomparison} compares these bounds for some values of $n$.

\begin{table}
    \centering
    \begin{tabular}{|c|c|c|c|}
        \hline
        n & Best-case MCCB bound & Worst-case MCCB bound & Octahedral bound \\ \hline
        3 & 7.32772 & 7.32772 & 10.9916 \\ \hline
        4 & 10.9916 & 15.1827 & 21.9832 \\ \hline
        5 & 14.6554 & 23.0377 & 36.6386 \\ \hline
        10 & 32.9747 & 81.6887 & 164.874 \\ \hline
        100 & 362.722  & 2,183.09 & 18,136.1 \\
        \hline
    \end{tabular}
    \medskip
    \caption{Contribution to best-case, worst-case, and octahedral upper bounds on volume from $n$-crossings.} \label{boundcomparison}
\end{table}

%
%
\section{Maximal Weaves}

In \cite{CKP} Champanerkar, Kofman, and Purcell introduced the \textit{infinite weave} $\mathcal{W}$, which is the unique infinite alternating link embedded in $\mathbb{R}^3$ with the $(4^4)$ regular tiling of the Euclidean plane as its projection graph, as in Figure \ref{weave}(A). They also study the \textit{volume density} of hyperbolic links, which is defined for a link $L$ to be
\begin{equation}
\mathcal{D}_{vol}(L)=\frac{vol(L)}{c(L)},
\end{equation}
and they considered $\mathcal{W}$ as the limit of an infinite sequence of finite links that contain increasingly large patches of the square weave.

In this manner, they showed that $\mathcal{W}$ is \textit{geometrically maximal}, meaning that in the limit it attains the maximal value of $\mathcal{D}_{vol}(\mathcal{W})=v_{oct}$, which realizes D. Thurston's octahedral upper bound on volume. Since each face of $\mathcal{W}$ has 4 sides, it also realizes the face-centered bipyramid upper bound of \cite{Adams2015}.

\begin{figure}[htb!]
\centering
\begin{subfigure}{.3 \textwidth}
\centering
\includegraphics[scale=0.6]{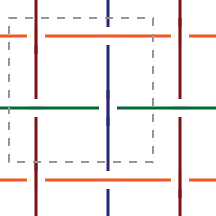}
\caption{Square weave.}
\label{squareweave}
\end{subfigure}
\quad
\begin{subfigure}{.3 \textwidth}
\centering
\includegraphics[scale=0.6]{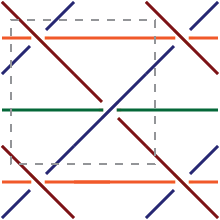}
\caption{Triple weave.}
\label{tripleweave}
\end{subfigure}
\quad
\begin{subfigure}{.3 \textwidth}
\centering
\includegraphics[scale=0.6]{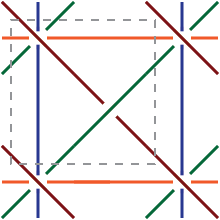}
\caption{Right triangle weave.}
\label{rightweave}
\end{subfigure}
\caption{Three weaves.}
\label{weave}
\end{figure}

We now apply these new decompositions and corresponding volume upper bounds to the \textit{triple weave} $\wt$  corresponding to the $(3^6)$ regular tiling, shown in Figure \ref{weave}(B), and to the \textit{isoceles right triangle weave} $\wrt$, corresponding to the $[4.8^2]$ Laves tiling, shown in Figure \ref{weave}(C). These are periodic infinite links embedded in $\mathbb{R}^3$. The triple weave has triple crossings of type 123 and 132 in alternate rows. The right triangle weave has an equal ratio of 2-crossings and 4-crossings, with 4-crossings given by the permutation 1243, where the top strand in the 4-crossing  passes through the 2-crossing as an understrand and the bottom strand in the 4-crossing passes through the 2-crossing as an overstrand.

For all three weaves we can take the quotient of  $\mathbb{R}^3$ by $\mathbb{Z}^2$, its discrete subgroup of translational isometries, to obtain a link in a thickened torus $T \times (0,1)$. Equivalently, we can view these as links in $S^3$, where we have added two components, each of which is a core curve of one of the solid tori to either side of the projection torus, so the complement of these two components is $T \times (0,1)$.

For the square weave, we denote this six-component link complement  in $S^3$ by $\mathcal{W}'$. There are four 2-crossings on the projection torus and the projection of the four components coming from the square weave is alternating on the torus, which is apparent from Figure \ref{tripleconstruction}. The core curves of the solid torus are shown in pink and light blue. The four link components of $\mathcal{W}'$ each bound a twice-punctured disk in the complement in $S^3$, and two of these twice-punctured disks are shown here shaded.

For the triple weave, we denote the link complement by $\wt'$. There are two 3-crossings on the torus, as in Figure \ref{torus}(B). And for the isoceles right triangle weave, the corresponding link complement is denoted $\wrt'$ and there is a single 2-crossing and a single 4-crossing on the torus, as in Figure \ref{torus}(D).

\begin{figure}[htb!]
\label{triplefigs}
\centering
\begin{subfigure}{.45 \textwidth}
\centering
\includegraphics[scale=0.1]{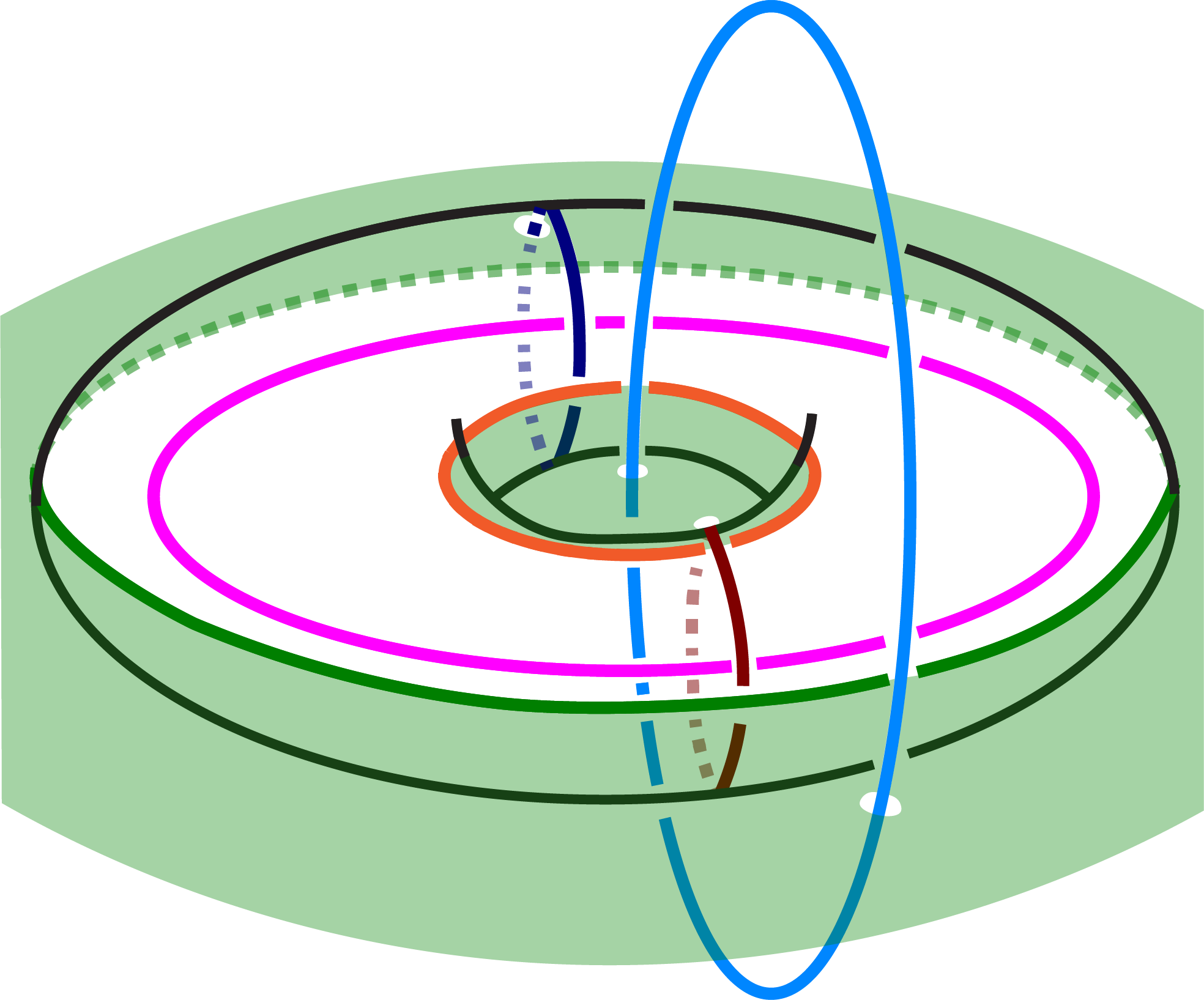}
\caption{The link complement $\mathcal{W}'$, with a pair of twice-punctured disks highlighted.}
\label{tripleconstruction}
\end{subfigure}
\quad
\begin{subfigure}{.45 \textwidth}
\centering
\vspace{32pt}
\includegraphics[scale=0.1]{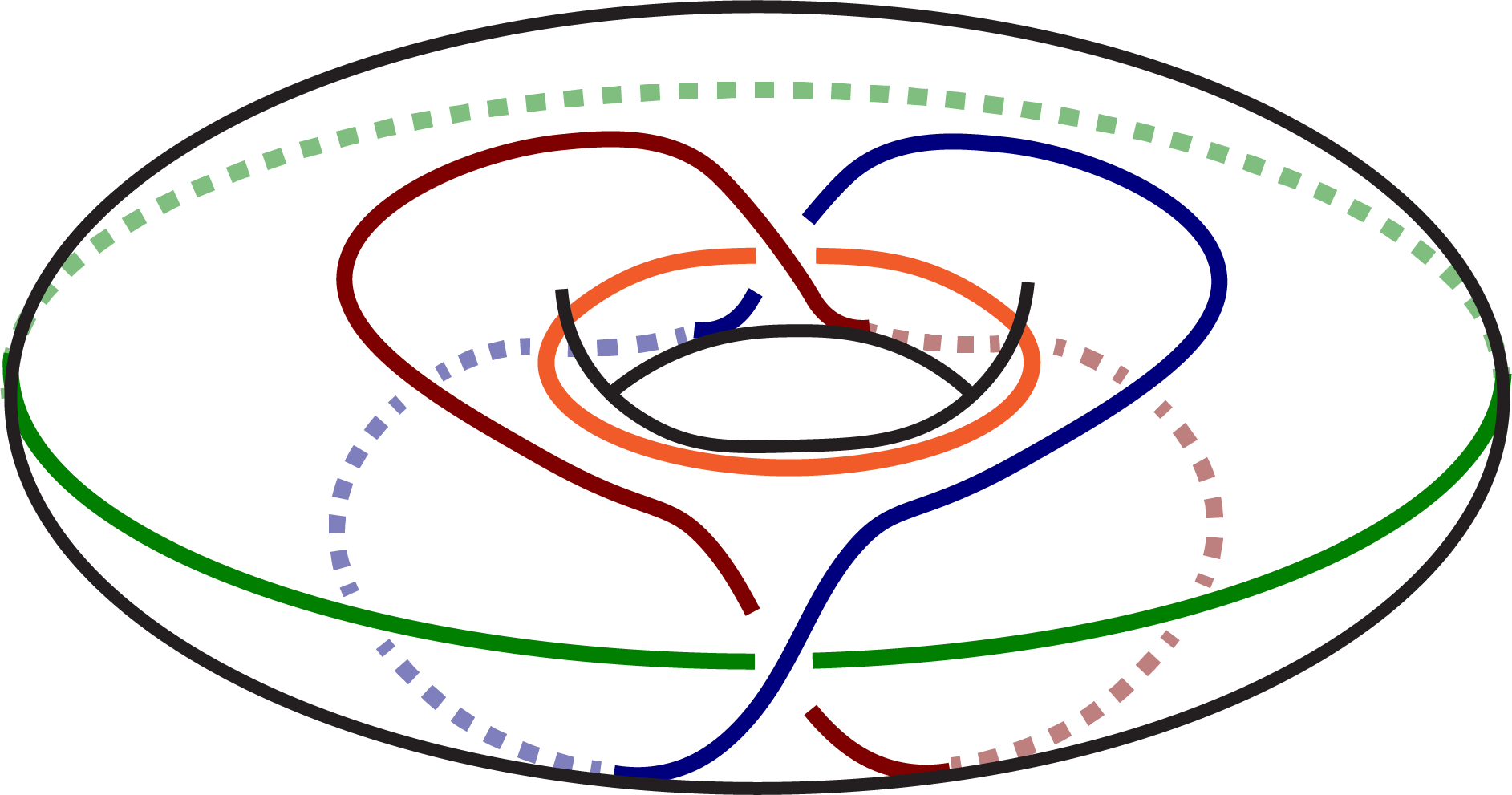}
\vspace{32pt}
\caption{The link $\wt'$ on the torus, derived from Figure \ref{torus}(A).}
\label{tripletorus}
\end{subfigure}

\begin{subfigure}{.45 \textwidth}
\centering
\includegraphics[scale=0.1]{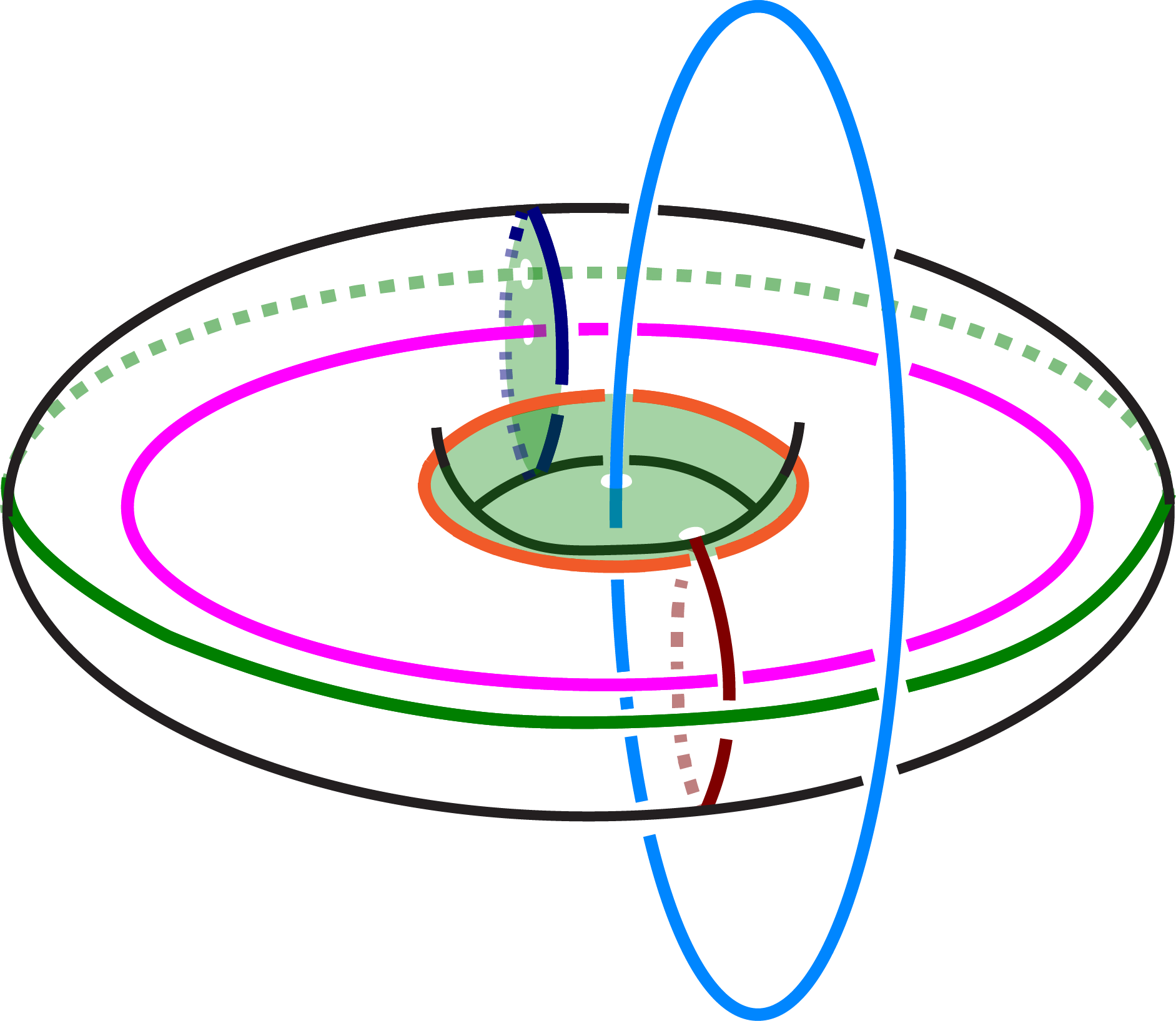}
\caption{The link complement $\mathcal{W}'$ with a different pair of twice-punctured disks highlighted.}
\label{rightconstruction}
\end{subfigure}
\quad
\begin{subfigure}{.45 \textwidth}
\centering
\vspace{22pt}
\includegraphics[scale=0.1]{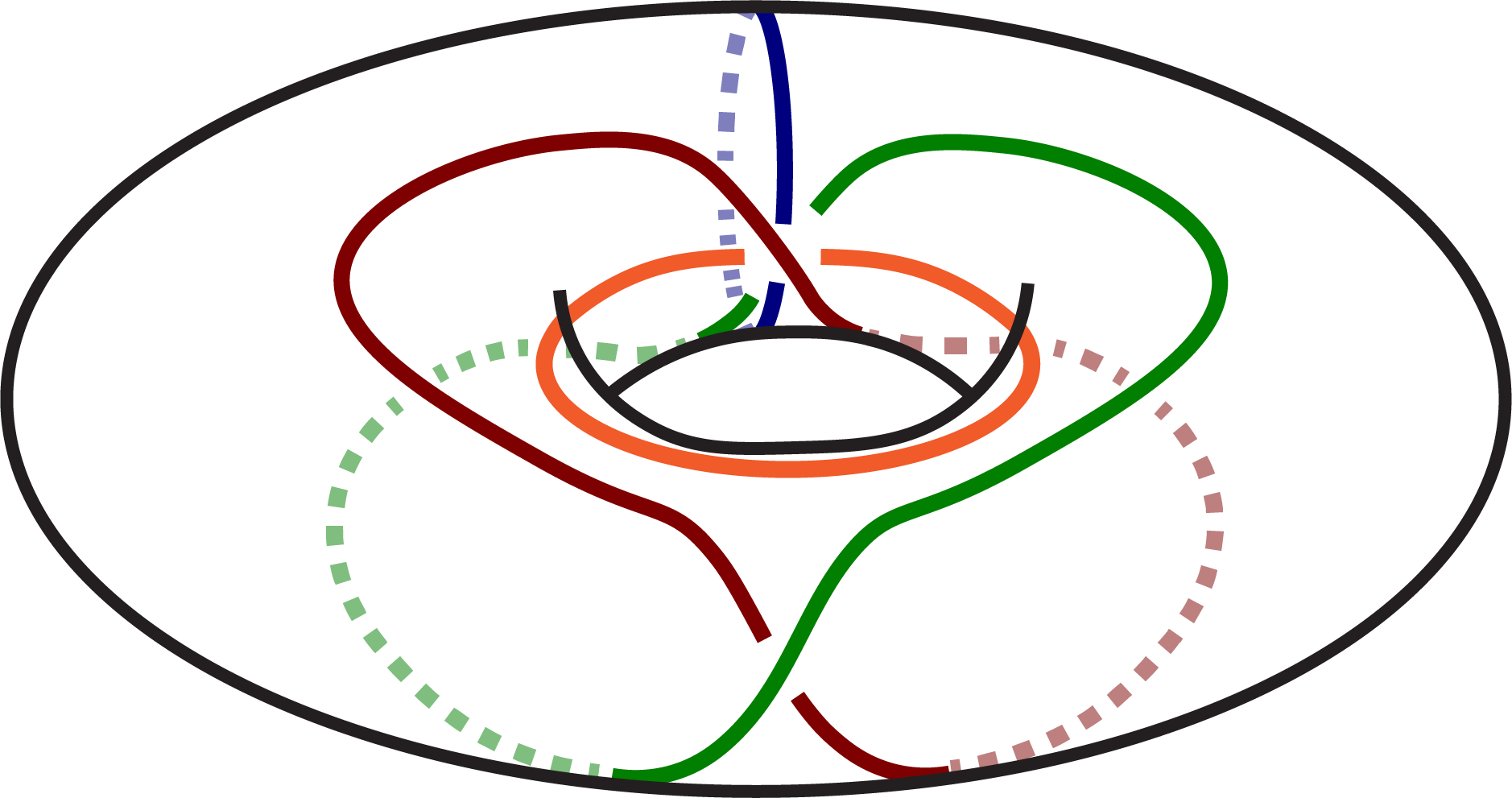}
\vspace{30pt}
\caption{The link $\wrt'$ on the torus, derived from Figure \ref{torus}(C).}
\label{righttriangletorus}
\end{subfigure}
\caption{On the left, two pairs of twice-punctured disks in the complement of $\mathcal{W}'$. The core curves are shown in pink and blue. On the right, the representations of $\wt'$ and $\wrt'$ generated by cutting open, twisting full twists, and regluing on these pairs of disks.}
\label{torus}
\end{figure}


\begin{thm}\label{WT}
$\mathcal{W}', \wt'$ and $\wrt'$ are all isometric with volume equal to $4v_{oct}$.
\end{thm}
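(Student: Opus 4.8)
The plan is to prove that all three link complements are isometric by exhibiting explicit topological homeomorphisms between them, and then invoking Mostow--Prasad rigidity to upgrade these to isometries; the common volume $4v_{oct}$ will then follow from a single volume computation together with the sharp upper bounds of Sections 2 and 3. The starting point is the six-component link complement $\mathcal{W}'$ arising from the square weave: it has four $2$-crossings on the projection torus, so D.~Thurston's octahedral decomposition places four ideal octahedra in $S^3 \setminus \mathcal{W}'$. I would first argue that this decomposition is realized geometrically by four \emph{regular} ideal octahedra, so that $\mathrm{vol}(\mathcal{W}') = 4v_{oct}$; this is essentially the statement that the square weave is geometrically maximal, already recorded in the discussion of \cite{CKP} in the text, applied to the finite quotient in the thickened torus. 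Equivalently, since each face of the square weave has four sides, the face-centered bipyramid bound of \cite{Adams2015} and the octahedral bound coincide at $4v_{oct}$ and are simultaneously sharp, forcing the hyperbolic structure to be the regular-ideal-octahedral one.

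Next, the key geometric input is that each of the four link components coming from the square weave bounds an embedded twice-punctured disk in $S^3 \setminus \mathcal{W}'$, as highlighted in Figure~\ref{tripleconstruction} and Figure~\ref{rightconstruction}. A twice-punctured disk is totally geodesic (its double is a thrice-punctured sphere, which is rigid), and cutting along it and regluing after $k$ full twists is a homeomorphism of the \emph{complement} that does not change the hyperbolic structure away from the disk --- indeed it does not change the homeomorphism type of the complement at all, because a full twist along a twice-punctured disk is isotopic to the identity in the complement of the two punctures once the bounding component is removed. More precisely, performing a full twist along such a disk is the same as doing $\pm 1$ surgery relations that are absorbed by the core-curve components; the upshot is that cutting, twisting, and regluing along these disks produces a link in the thickened torus whose complement in $S^3$ is homeomorphic to $\mathcal{W}'$. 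I would then check, diagram in hand, that twisting along the first pair of disks (Figure~\ref{tripleconstruction}) converts the square-weave diagram on the torus into the triple-weave diagram on the torus of Figure~\ref{tripletorus} --- the two $2$-crossings being merged into a single $3$-crossing of type $123$ or $132$ by the twist --- and that twisting along the second pair of disks (Figure~\ref{rightconstruction}) converts it into the right-triangle-weave diagram of Figure~\ref{righttriangletorus}, merging crossings into the $4$-crossing of type $1243$ sitting next to a surviving $2$-crossing. Hence $\mathcal{W}' \cong \wt' \cong \wrt'$ as manifolds.

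With the homeomorphisms established, Mostow--Prasad rigidity gives that $\mathcal{W}'$, $\wt'$, and $\wrt'$ are all isometric, provided one of them (hence all) is hyperbolic with finite volume; hyperbolicity of $\mathcal{W}'$ follows from the geometric structure already exhibited in the first step, and volume is a homeomorphism invariant once we know rigidity applies, so $\mathrm{vol}(\wt') = \mathrm{vol}(\wrt') = \mathrm{vol}(\mathcal{W}') = 4v_{oct}$. As an independent consistency check (and to make the role of Sections 2--3 explicit), I would verify that the MCCB bound of \eqref{MCCB} applied to $\wt'$ --- two $3$-crossings, each contributing crossing-centered bipyramids of sizes $4,4$ by Corollary~\ref{boundedbipgrowth}, for a total of four octahedra --- gives exactly $4v_{oct}$, and similarly that the decomposition for $\wrt'$ (the $4$-crossing of type $1243$ has size sequence $4,4,4$ and the $2$-crossing contributes one octahedron, but the shared structure on the torus means the honest count is again four octahedra) meets $4v_{oct}$; since these are upper bounds and they agree with the known volume, the bounds are sharp here, which is the point of calling these weaves ``maximal.''

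The main obstacle I anticipate is the middle step: carefully justifying that cutting along the twice-punctured disks, inserting full twists, and regluing yields precisely the triple-weave and right-triangle-weave diagrams on the torus, and that this operation genuinely preserves the homeomorphism type of the $S^3$-complement rather than merely the complement of the link-plus-cores. This requires a careful isotopy argument in the thickened torus keeping track of how strands at a crossing are permuted in level by the twist --- the bookkeeping that turns two adjacent $2$-crossings into one $3$-crossing with the correct permutation, and that the ``vertical'' edges/strands slide as described in Section~2 --- and is exactly the part the authors thank the referees for helping to clarify.
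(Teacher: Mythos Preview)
Your proposal is correct and takes essentially the same approach as the paper: cut $\mathcal{W}'$ along the highlighted surfaces, reglue after a full twist to produce $\wt'$ and $\wrt'$ as links while leaving the complement homeomorphic, then apply Mostow--Prasad rigidity and the octahedral decomposition (or \cite{CKP}) to obtain the common volume $4v_{oct}$. One small correction: the twice-punctured disk is not a thrice-punctured sphere by \emph{doubling}; rather, in the link complement it already \emph{is} a thrice-punctured sphere, since its boundary circle lies on a removed link component and becomes a third puncture --- and this is precisely why the full Dehn twist, being a twist about a peripheral curve on a thrice-punctured sphere, is isotopic to the identity there (your subsequent sentence gets this right even though the parenthetical does not).
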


\begin{proof}
By cutting $\mathcal{W}'$ open along the two thrice-punctured spheres highlighted in Figure \ref{torus}(A), introducing a full twist on each, and regluing them, we obtain $\wt'$, as shown in Figure \ref{torus}(B). The resulting manifold is isometric to the original and therefore has the same volume.

Similarly, introducing a full twist to each of the thrice-punctured spheres highlighted in Figure \ref{torus}(C)  yields $\wrt'$ as shown in Figure \ref{torus}(D). Therefore all three are isometric.  Using either results of \cite{CKP} as applied to $\mathcal{W}'$ or the decomposition of any of these link complements into four ideal octahedra meeting four along each edge yields a volume, via the Mostow/Prasad Rigidity Theorem, of $4 v_{oct}$. \end{proof}

For all three weaves, the volume of this shared manifold achieves both the MFCB bound and MCCB bound. In the case of $\mathcal{W}'$, there is one ideal regular octahedron corresponding to each of the crossings of the link in $T \times (0,1)$. In the case of $\wt'$, there are two regular ideal octahedra at each of the two triple crossings in $T \times (0,1)$. In the case of $\wrt'$, there is one ideal regular octahedron at the 2-crossing and three octahedra at the single 1243-crossing in $T \times (0,1)$. For the MFCB bound, we consider the bipyramids coming from the faces of the projection onto the torus, and we obtain one regular ideal octahedron per face for the four faces, in each of these three cases, again realizing the upper bound on volume. Note that we must remove additional link components in order to be in $T \times (0,1)$ and make all vertices ideal on the octahedra, which is necessary in order to realize the upper bounds on volume.

\begin{coro}
$\wt$ is geometrically maximal among all 3-crossing links in $T\times (0,1)$.
\end{coro}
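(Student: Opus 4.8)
The plan is to verify the two ingredients of geometric maximality in the sense of \cite{CKP}: an upper bound $\mathcal{D}_{vol}(L)\le 2v_{oct}$ valid for every $3$-crossing link $L$ in $T\times(0,1)$, together with the fact that $\wt$ attains the value $2v_{oct}$. For the infinite link $\wt$ the volume density is read off from a sequence of finite links that exhaust it; the convenient such sequence is the family of quotients $\wt^{(m)}$ of $\wt$ by the index-$m$ sublattices of $\mathbb{Z}^2$, each of which is a finite $3$-crossing link in $T\times(0,1)$ covering $\wt'=\wt^{(1)}$.

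For the upper bound, I would first observe that the crossing-centered bipyramid decomposition of Section~2 is built locally, face by face and crossing by crossing, so it carries over to a link $L$ in a thickened torus with a projection onto $T$, with the single proviso that the two apexes $U$ and $D$ lying above and below the projection surface are no longer finite interior vertices but are realized by the two ends of $T\times(0,1)$ (equivalently, by the two core curves one adds to pass to the $S^3$ model, exactly as with $\wt'$). In particular every crossing-centered bipyramid so obtained is fully ideal: its top and bottom sit on cusps of $L$, and its equatorial vertices sit on the cusps coming from the two ends of $T\times(0,1)$. Now suppose $L$ has a projection onto $T$ with $k$ triple crossings. By Corollary~\ref{boundedbipgrowth}, each triple crossing contributes exactly two crossing-centered bipyramids, both of size $4$, i.e.\ ideal octahedra; hence $(T\times(0,1))\setminus L$ is tiled by $2k$ ideal octahedra. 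As $v_{oct}$ is the maximal volume of an octahedron in $\bbH^3$, we get $vol\bigl((T\times(0,1))\setminus L\bigr)\le 2k\,v_{oct}$ and therefore $\mathcal{D}_{vol}(L)=vol/k\le 2v_{oct}$. (If the complement is not hyperbolic the bound is immediate, e.g.\ via the Gromov norm.)

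For sharpness, Theorem~\ref{WT} gives $vol\bigl((T\times(0,1))\setminus\wt'\bigr)=vol(S^3\setminus\wt')=4v_{oct}$, and $\wt'$ has exactly two triple crossings on $T$, so $\mathcal{D}_{vol}(\wt')=2v_{oct}$. Since $\wt^{(m)}$ is an $m$-fold cover of $\wt'$, it has volume $4m\,v_{oct}$ and $2m$ triple crossings, so $\mathcal{D}_{vol}(\wt^{(m)})=2v_{oct}$ as well; hence $\mathcal{D}_{vol}(\wt)=2v_{oct}$, which meets the upper bound, and $\wt$ is geometrically maximal.

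The crux, and the step I expect to need the most care, is explaining why the bound $2v_{oct}$ per triple crossing is actually \emph{attained} here even though the MCCB inequality \eqref{MCCB} for links in $S^3$ is strict. That strictness is precisely due to the crossing-centered bipyramids in the $S^3$ setting having \emph{finite} equatorial vertices at $U$ and $D$, which prevents them from being regular ideal octahedra; in $T\times(0,1)$ those vertices instead lie on the cusps provided by the two ends of the thickened torus, so the octahedra are free to be---and, for $\wt$ by Theorem~\ref{WT}, are---simultaneously regular ideal octahedra of volume exactly $v_{oct}$. The residual points, that the torus-projection form of the Section~2 construction genuinely tiles $(T\times(0,1))\setminus L$ for an arbitrary such $L$, and that the exhaustion $\{\wt^{(m)}\}$ computes $\mathcal{D}_{vol}(\wt)$, are routine and parallel the treatment of the square weave in \cite{CKP}.
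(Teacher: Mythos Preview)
Your argument is essentially the paper's: the MCCB bound yields at most two octahedra, hence at most $2v_{oct}$, per $3$-crossing for any link in $T\times(0,1)$, and Theorem~\ref{WT} shows $\wt'$ attains this density. Your exhaustion by covers $\wt^{(m)}$ is correct but superfluous, since the paper treats the corollary as a statement about the quotient link in $T\times(0,1)$ directly, computing $\mathcal{D}^3_{vol}=4v_{oct}/2=2v_{oct}$ from $\wt'$ itself.
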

\begin{proof}
From the preceding theorem, $vol(\wt)=4v_{oct}$. Since it contains two 3-crossings on the torus, its triple-crossing number is $c_3(\wt)= 2$. This implies that the \textit{triple volume density} of $\wt$ is
\begin{equation}
\mathcal{D}^3_{vol}(\wt)=\frac{vol(\wt)}{c_3(\wt)}=2v_{oct}
\end{equation}
This also realizes the MCCB bound for 3-crossings in general, where the region surrounding each 3-crossing can be decomposed into two octahedra. From the MCCB bound it follows that no link embedded in $T\times (0,1)$ can have a higher 3-crossing volume density.
\end{proof}

For finite links with 3-crossing planar projections in $S^3$, the MCCB bound of $2v_{oct}$ per crossing given by \cite{Adams2012} (and the equivalent decomposition above) certainly holds. However volume bound improvements can be made by collapsing the finite $U$ and $D$ vertices to the cusp, so equality is unattainable. In \cite{CKP} Champanerkar, Kofman, and Purcell were able to show that certain sequences of finite links that contain ever-increasing patches of the square weave also approach the infinite square weave in volume density. Their argument used lower bounds on volume attained by guts, which were derived from the essentiality of the checkerboard surfaces that came from the alternating projections that they considered. We expect analogous sequences of finite links containing increasing patches of the triple weave as in Figure \ref{patch} to similarly approach $\mathcal{D}^3_{vol}(\wt)=2v_{oct}$ in triple volume density, but the corresponding theory for links in triple-crossing projections is not yet developed enough to permit a similar argument.

\begin{figure}[htpb]
\centering
           \includegraphics[scale=.3]{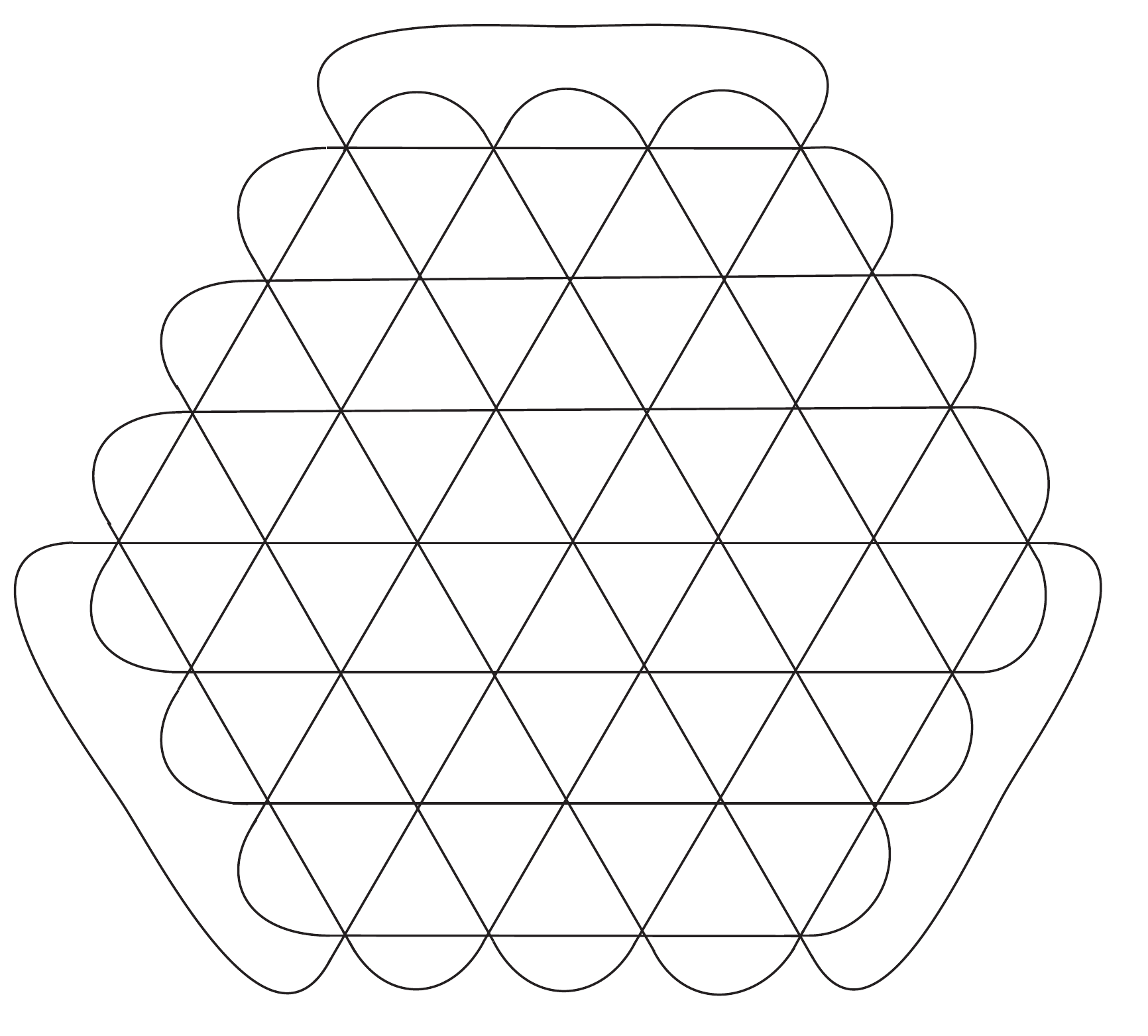}
\caption{Links containing ever larger patches of the triple weave should have volume density approaching $2v_{oct}$.}
  \label{patch}
\end{figure}


\begin{conj}
For links $L$ in a 3-crossing projection on the plane, the triple-crossing volume density bound
\begin{equation*}
\mathcal{D}^3_{vol}(L)< 2v_{oct}
\end{equation*}
is sharp, and is realized by a sequence of links as in Figure \ref{patch}.
\end{conj}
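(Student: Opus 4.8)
The plan is to adapt the argument of Champanerkar, Kofman, and Purcell \cite{CKP} for the square weave, combining the MCCB upper bound of Section 3 with a matching asymptotic lower bound obtained from guts of essential spanning surfaces. The upper bound is the easy direction: for any finite link $L$ with a triple-crossing planar projection, the crossing-centered bipyramid decomposition of Theorem \ref{crossingbipsize} places two octahedra at each triple crossing, so $vol(L) \le 2v_{oct}\,c_3(L)$, hence $\mathcal{D}^3_{vol}(L) \le 2v_{oct}$. The inequality is in fact strict for every finite $L$: the bipyramid decomposition carries finite vertices $U$ and $D$ which can be collapsed onto a cusp, strictly lowering the sum of maximal bipyramid volumes (this is exactly the improvement used for finite links in \cite{CKP} and noted after Theorem \ref{logbound}). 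So $\mathcal{D}^3_{vol}(L) < 2v_{oct}$ holds unconditionally, and the content of the conjecture is that this bound is approached.

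Let $\{L_n\}$ be the finite links obtained from the hexagonal patches $H_n$ of Figure \ref{patch} by a fixed, controlled closure of the boundary strands, so that the triple-crossing number $c_3(L_n)$ equals the number of triple crossings inside $H_n$, which grows quadratically in $n$ while the number of boundary crossings grows only linearly. By Theorem \ref{WT} and its corollary, $\wt$ decomposes into regular ideal octahedra, meeting four along each edge, and realizes $2v_{oct}$ per triple crossing; the goal is thus to show $vol(L_n)/c_3(L_n) \to 2v_{oct}$. Since the upper bound already gives $\limsup_n vol(L_n)/c_3(L_n) \le 2v_{oct}$, it suffices to prove $\liminf_n vol(L_n)/c_3(L_n) \ge 2v_{oct}$.

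For the lower bound I would follow CKP's use of guts. One first produces spanning surfaces $S_n \subset S^3 \setminus L_n$ that restrict over the patch to the two natural surfaces carried by the triple-weave structure --- the analogues of the checkerboard surfaces, visible for $\wt$ itself as the two families of faces of the triple-crossing projection --- and proves they are essential: incompressible, $\partial$-incompressible, and with acylindrical complement after cutting. Granting this, the Agol--Storm--Thurston inequality bounds $vol(S^3 \setminus L_n)$ below by $v_{oct}$ times the negative Euler characteristic of the guts of $S^3 \setminus L_n$ cut along $S_n$, and one estimates that quantity from the diagram: each interior triple crossing of $H_n$ contributes a fixed amount, while the linear-size boundary contributes only a lower-order correction, yielding $vol(L_n) \ge 2v_{oct}\,c_3(L_n) - O(n)$ and hence the convergence of densities.

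The hard part --- and the reason this remains a conjecture --- is the essentiality step. There is as yet no Menasco-type combinatorial theory for triple-crossing (or general multicrossing) diagrams, so one must first isolate the correct notion of an ``alternating'' or ``adequate'' triple-crossing diagram, identify which spanning surfaces play the role of the checkerboard surfaces, and establish incompressibility and acylindricity by a surgery argument carried out inside the triple-crossing balls, replacing the bigon-and-square moves of Menasco and of Futer--Kalfagianni--Purcell. A secondary obstacle is verifying that essentiality of the surfaces over $H_n$ is inherited by the closed-up link $L_n$ independently of the chosen closure, and quantifying the guts finely enough that the boundary contribution is genuinely $o(c_3(L_n))$; this is where the F\o{}lner-type ratio $|\partial H_n|/|H_n| \to 0$ enters and where the precise closure of $H_n$ matters.
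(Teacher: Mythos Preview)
This statement is a \emph{conjecture} in the paper, not a theorem: the paper offers no proof and explicitly says that ``the corresponding theory for links in triple-crossing projections is not yet developed enough to permit a similar argument.'' So there is no proof in the paper against which to compare your attempt.

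That said, your write-up is not a proof either, and you are candid about this. What you have produced is a strategy sketch that mirrors the paper's own discussion: the upper bound $\mathcal{D}^3_{vol}(L) < 2v_{oct}$ is exactly the MCCB bound with the $U,D$ collapse giving strictness, and for the lower bound you propose the CKP guts argument via Agol--Storm--Thurston. You correctly pinpoint the missing ingredient --- a Menasco/adequacy-type theory establishing essentiality of the analogue of checkerboard surfaces for triple-crossing diagrams --- and this is precisely the gap the paper cites as the reason the statement remains conjectural. Your outline goes somewhat further than the paper in articulating what such a theory would need to deliver (incompressibility inside the triple-crossing balls, control of the closure of $H_n$, the F{\o}lner ratio), but none of this is established, so the proposal should be read as an informed plan rather than a proof.
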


\bibliographystyle{plain}
\bibliography{GThesis}

\end{document}